\long\def\comment#1{}
\newcommand{\myas}{\mbox{\rm a.s.}}
\newcommand{\myE}{\mathbb{E}}
\newcommand{\myF}{\mathscr{F}}
\newcommand{\myFb}{\mathbb{F}}
\newcommand{\myG}{\mathscr{G}}
\newcommand{\myH}{\mathcal {H}}
\newcommand{\myL}{\mathcal {L}}
\newcommand{\myLiz}{{l^2_\infty(\Omega,\myF,\myFb; \myR^{n_z})}}
\newcommand{\myLiv}{{l^2_\infty(\Omega,\myF,\myFb; \myR^{n_v})}}
\newcommand{\myN}{\mathbb {N}}
\newcommand{\myNz}{\mathcal {N}}
\newcommand{\myP}{\mathbb{P}}
\newcommand{\myR}{\mathbb{R}}
\newcommand{\myfunV}{{\mathbb{V}}}
\newcommand{\myBeta}{{\mathscr{B}_\myfunV}}
\newcommand{\myS}{\mathcal {S}}
\newtheorem{theorem}{Theorem}[section]
\newtheorem{corollary}{Corollary}[section]
\newtheorem{remark}{Remark}[section]
\newtheorem{definition}{Definition}[section]
\newtheorem{proposition}{Proposition}[section]
\newtheorem{lemma}{Lemma}[section]
\newtheorem{example}{Example}[section]
\begin{document}

\title{New Approach to General Nonlinear  Discrete-Time  Stochastic  $H_\infty$ Control
}

\author{Xiangyun Lin, ~Tianliang Zhang, ~Weihai Zhang$^\dag$,~
        ~and~ Bor-Sen Chen
\thanks{X. Lin is with the College of Mathematics and Systems Science, Shandong University of Science and
Technology, Qingdao, 266590, China.}
\thanks{T. Zhang is with the School  of Automation  Science  and
Engineering, South China University  of Technology, Guangzhou,
510641,  China.}
\thanks{W. Zhang is with the College of Electrical Engineering and Automation,
Shandong University of Science and Technology, Qingdao, 266590,
China.}
\thanks{B. S. Chen is with the Department of Electrical Engineering,
National Tsing Hua University, Hsinchu, 30013, Taiwan.}
\thanks{$\dag$ Corresponding author. Email: w\_hzhang@163.com.}
}


\maketitle

\begin{abstract}
In this paper, a  new  approach based on convex analysis  is
introduced  to solve the $H_\infty$ problem for discrete-time
nonlinear stochastic systems. A stochastic version of bounded real
lemma is proved and  the state feedback $H_\infty$ control  is
studied. Two examples are presented to show the effectiveness of our
developed theory.
\end{abstract}

{\bf Key words:} $H_\infty$ control, bounded real lemma, convex
analysis, internal stability, external stability.


\section{Introduction}

$H_\infty$ theory  was initially formulated by Zames
\cite{Zames1981} in the early 1980's for linear time-invariant
systems, where the $H_\infty$ norm, defined in the frequency-domain
form for a stable transfer matrix, plays an important role in robust
linear control design; see
 \cite{BasarBernhard1995}  and \cite{FrancisDoyle1987}. A
breakthrough of the classical $H_\infty$  theory in \cite{DGKF1989}
initiated    the  time-domain state-space approach in the $H_\infty$
study,  and turned  the  $H_\infty$ controller  design into  solving
two algebraic Riccati equations (AREs).  After the appearance of
\cite{DGKF1989}, $H_\infty$ control theory has made a great progress
in the 1990's \cite{ZhouDoyle98}. Up to now, $H_\infty$ control has
been successfully  applied to network control \cite{hudeng2017},
synthetic biology design \cite{wuzhangchen2011, Chenchanglee2009},
etc..

Instead of solving   two Riccati equations or  Riccati inequalities
as  in \cite{DGKF1989} , Gahinet and
Apkarian\cite{GahinetApkarian1994} introduced the linear matrix
inequality (LMI) approach to the $H_\infty$ controller design, which
is more convenient due to the usage of LMI Toolbox. In the
time-domain framework, the $H_\infty$ control theory is first
extended to   nonlinear deterministic systems expressed  by ordinary
differential equations(ODEs). For example, based on the solutions of
Hamilton-Jacobi equations or inequalities, the state feedback
$H_\infty$  control  \cite{VanderSchaft1991}  and output feedback
$H_\infty$ control \cite{BallHeltonWalker1993}, \cite{isidori_92},
 were discussed, respectively. The reference
\cite{HinrichsenPritchard1998} first systematically studied the
stochastic $H_\infty$ control of linear It\^o systems, where  a
stochastic  bounded real lemma was obtained in terms of linear
matrix inequalities (LMIs), and the dynamic output feedback
$H_\infty$ problem was also discussed. At the same time, the state
feedback $H_\infty$ control for linear time-invariant It\^o systems
with state-dependent noise was also discussed in \cite{ug1998} based
on stochastic differential game. We refer the reader to the
monograph \cite{pete}  for the early development in the $H_\infty$
control theory of linear  It\^o systems. Except for the $H_\infty$
estimation, the extended Kalman filtering on stochastic It\^o
systems was also discussed in \cite{Germani2007}.
 By means of completing the squares and stochastic dynamic programming, the state-feedback $H_\infty$ control and robust $H_\infty$
 filtering were  extensively investigated  in \cite{ZhangChen2006} and \cite{ZhangChenTseng2005} for affine  stochastic It\^o
 systems.  It can be founded that starting from 1998, the stochastic $H_\infty$ control has become a popular research field \cite{zhangxiechen2017}, which  has  been
 extended to other stochastic systems  such as Markovian
 jumps \cite{BoukasLiu2001,  XuLamMao2007, v.dragan_2005}, Poisson jumps \cite{Lin2011} and  L\'{e}vy processes \cite{ChenWu2015}.

With the development of  $H_\infty$ control theory of
continuous-time It\^o systems, the discrete-time $H_\infty$ control
has  also attracted considerable attention. For deterministic linear
systems, Basar and Bernhard \cite{BasarBernhard1995} have developed
the discrete-time counterpart  of the continuous-time $H_\infty$
design. Based on the dissipation inequality, differential game, and
LaSalle's invariance principle, Lin and Byrnes \cite{LinByrnes1996}
developed the $H_\infty$ control theory for general nonlinear
discrete-time deterministic  systems. Bouhtouri, Hinrichsen and
Pritchard \cite{BouhtouriHinrichsenPritchard1999} first studied  the
$H_\infty$-type control for  discrete-time linear stochastic systems
with multiplicative noise.  The infinite horizon mixed
$H_2/H_\infty$ control for discrete-time stochastic systems with
state and disturbance dependent noise can be found in
\cite{ZhangHangXie2008}, which turned  out that the mixed
$H_2/H_\infty$ controller design is associated with the solvability
of the four coupled matrix-valued equations. For the disturbance
attenuation problem of linear discrete-time multiplicative noise
systems with Markov jumps, we refer the reader to
\cite{dragan_2010}.  Berman and Shaked \cite{BermanShaked2006} first
explored the general discrete-time stochastic $H_\infty$ control
problem, and presented a bounded real lemma in terms Hamilton-Jacobi
inequality, where the Hamilton-Jacobi inequality contains the
supremum of some conditional mathematical expectation. As an
application, for a class of discrete-time time-varying nonlinear
stochastic systems with multiplicative noises,  a relatively easily
testing criterion was derived via taking the Lyapunov function to be
a quadratic form.  In \cite {luzhang2008}, we considered the finite
horizon $H_\infty$ control for the following affine nonlinear system
\begin{equation}
\left\{
\begin{array}{l}
x_{k+1}=f(x_k)+g(x_k)u_k+h(x_k)v_k\\
\ \ \ \ \  \ \ \hspace{0.5cm}  +[f_1(x_k)+g_1(x_k)u_k+h_1(x_k)v_k]\omega_k, \\
z_k= \left[
\begin{array}{cc}
m(x_k)\\
u_k
\end{array}
\right], \
 x_0\in {\cal R}^n.
\end{array}
\label {eq iih13_2.2.3dfd} \right.
\end{equation}
The references \cite{ShenWangShuWei2009} and \cite{lishi2012}
discussed the $H_\infty$ filtering design for some uncertain
discrete-time affine nonlinear systems with  time delays by means of
Hamilton-Jacobi inequalities  or matrix inequalities.

However, there are still some essential  difficulties in nonlinear
stochastic $H_\infty$ control design due to the following reasons:

$\bullet$  Even for affine nonlinear discrete-time multiplicative
noise systems (a special class of nonlinear stochastic systems), in
order to separate the control input
 $u$ from unknown exogenous disturbance $v$, the selection of the Lyapunov candidate function has to be  a
 quadratic function, which often leads to conservative results
 \cite{zhangxiechen2017}.

$\bullet$   Because the  Hamilton-Jacobi inequality depends on the
supremum of a conditional mathematical expectation function (see (8)
of \cite{BermanShaked2006}) or  the mathematical expectation of the
state trajectory (see (30) of \cite {luzhang2008}),  which makes
  the given  $H_\infty$ controller be not easily constructed. So
  the   general discrete-time nonlinear stochastic $H_\infty$ theory  merits further study,  and new methods should be introduced in this field.

$\bullet$  Even for the affine nonlinear system (\ref{eq
iih13_2.2.3dfd}), as said in  \cite{zhangxiechen2017},  the
completing the squares technique is no longer applicable except for
special  quadratic Lyapunov functions. Different from linear system
case, the nonlinear discrete system cannot be iterated. In addition,
different from It\^o systems where an infinitesimal generator ${\cal
L}V(x)$ can be used, how to give practical $H_\infty$  criteria for
general nonlinear discrete-time stochastic systems  which are not
dependent on the mathematical expectation of the trajectory is a
challenging problem.

This paper will make a contribution to the $H_\infty$ theory of
general nonlinear discrete-time stochastic systems.  It is
well-known that the bounded real lemma plays a key role in the study
of $H_\infty$ control, so we will first establish a bounded real
lemma for the following discrete-time nonlinear stochastic
state-disturbance system
\begin{eqnarray}\label{eq-system-1}
\left\{\begin{array}{l}
  x_{k+1}=f(x_k,\omega_k)+g(x_k,\omega_k)v_k,\\
  z_k=\left[\begin{array}{l}
  m(x_k)\\
  m_1(x_k)v_k
  \end{array}\right], x_0\in\myR^n, k=0,1,2,\cdots
  \end{array}
  \right.
\end{eqnarray}
where $f:\myR^n\times\myR^d\rightarrow \myR^n$,
$g:\myR^n\times\myR^d\rightarrow \myR^{n\times n_v}$,
$m:\myR^n\rightarrow \myR^{n_m}$ and $m_1:\myR^n\rightarrow
\myR^{(n_z-n_m)\times n_v}$ are measurable vector/matrix-valued
functions. $x_k$ , $v_k$ and $z_k$ represent respectively the system
state, external disturbance and the regulated output with
appropriate dimensions. Throughout this paper,
 $\{\omega_k\}_{k\in\myN}$ is a sequence of independent $d$-dimensional random variables with an identical distribution defined on the complete probability
 space
$(\Omega,\myF,\myP)$, and the corresponding filtration is
 $\myFb=\{\myF_k\}_{k\in\myN}$,  where $\myF_k$ is the  $\sigma$-field  generated by $\omega_0,\cdots,\omega_{k-1}$.
 Based on the
obtained bounded  real lemma, we pay our attention to the $H_\infty$
control of the following controlled system
\begin{eqnarray}\label{eq-system-control-H-infty}
  \left\{\begin{array}{l}
  x_{k+1}=f(x_k,u_k,\omega_k)+g(x_k,\omega_k)v_k\\
  z_k=\left[\begin{array}{c}
    m(x_k,u_k)\\
    m_1(x_k)v_k
  \end{array}\right], x_0\in\myR^n, k=0,1,2,\cdots
  \end{array}\right.
\end{eqnarray}
 where $f:\myR^n\times\myR^{n_u}\times\myR^d\rightarrow\myR^n$ and
 $m:\myR^n\times\myR^{n_u}\rightarrow\myR^{n_m}$  are respectively measurable vector-valued functions. $\{u_k, k=0,1,2,\cdots\}$ is the control input sequence.
$\{v_k, k=0,1,2,\cdots\}$ and $\{u_k, k=0,1,2,\cdots\}$ are  adapted
sequences with respect to $\{\myF_k, k=0,1,2,\cdots\}$.

 For affine systems with multiplicative noises, when using the method of completing the squares as used in \cite{BermanShaked2006}, the usual conditions are supposed that $V(x)$ has
 the form of quadratics $V(x)=x^TPx$ or is twice differentiable which will be used in Taylor's expansion, see \cite{ZhangChen2006} and \cite{ShenWangShuWei2009}.
 The main purposes of those assumptions are to separate $v$ from other variables(eg.$x$ or $u$). The same difficulty which is always the main one, also exists in solving $H_\infty$
 problems of stochastic nonlinear system \eqref{eq-system-1} and \eqref{eq-system-control-H-infty}. Concretely, for system \eqref{eq-system-1}, separating $v$ from $x$ is the key that
 will solve $H_\infty$ problems to obtain some important results such as well known bounded real lemmas; and for system \eqref{eq-system-control-H-infty}, separating $v$ from $u$ and $x$
  is also the key problem in designing $H_\infty$ controller. In order to overcome those difficulties of dividing-variables, we find that the following properties of convex function
  $V:\myR^n\rightarrow\myR$
$$
V(\alpha x+(1-\alpha)y)\leq \alpha V(x)+(1-\alpha)V(y), \
\alpha\in[0,1], x, y\in\myR^n
$$
can be used in the analysis of $H_\infty$ control problems to
separate $v$ from $x$ or $u$. Based on this idea, we introduce a
convex method to discuss the $H_\infty$ control problems of system
 \eqref{eq-system-1} and \eqref{eq-system-control-H-infty}.

This paper is organized as follows: In section 2, the
 stability theory for discrete-time nonlinear systems and martingale  properties are retrospected,
 which will be used in the discussion of $H_\infty$ control. In section 3, the  internal stability  and
 external stability for system \eqref{eq-system-1} are discussed. Based on the convex properties of the auxiliary Lyapunov function, the bounded real lemma
 for system \eqref{eq-system-1} is obtained.
 In section 4, the state-feedback $H_\infty$ control is discussed via the convex analysis method, and then the state-feedback
 $H_\infty$ controller is designed. In section 5, numerical simulations are given  to show the validity  of the obtained results.

Throughout this paper, we adopt the following notations:

$\myR$: the set of all real numbers; $\myR^+$: the set of all
positive real numbers including  $0$; $\myR^n$: the $n$-dimensional
real vector space with the norm
$$
|x|=\sqrt{\sum\limits_{i=1}^nx_i^2}
$$
for $x=(x_1,\cdots,x_n)^T\in\myR^n$; $\myR^{m\times n}$: the set of
all real  $m\times n$ matrices; $\myN$: the set of all positive
integers including $0$; $n_v$: the dimension of vector $v$;
$\myS^n(\myR)$: the set of all $n\times n$ symmetric matrices;
$\myS^n_+(\myR)$: the set of all real positive definite symmetric
matrices; $\bar{\sigma}(Q)(\underline{\sigma}(Q))$: the
maximum(minimum) eigenvalue of $Q\in\myS^n(\myR)$; $P\geq0$($P>0$):
the symmetric matrix $P$ is positive semi-definite (definite);
$L^2(\Omega,\myF_k;\myR^{n_v})$: the $\myF_k$-measurable
second-order moment  random variable space with the norm
$$
\|\xi\|_{L^2_{\myF_k}}=\sqrt{\myE|\xi|^2}<\infty;
$$
$\myLiv$: the space of  stochastic sequence $v=\{v_k\}_{k\in\myN}$
with the norm
$$
\|v\|_{l_\infty^2}=\sqrt{\myE\big[\sum\limits_{k=0}^\infty|v_k|^2\big]}<\infty,
$$
where $v_k\in L^2(\Omega,\myF_k;\myR^{n_v})$, $k\in\myN$.

%

\section{Preliminaries\label{sec:UD}}

Throughout this paper, let $(\Omega,\myF,\myP)$ be a complete
probability space and $\{\omega_k\}_{k\in\myN}$  is an
$\myR^d$-valued independent random variable sequence. Denote $\myNz$
 the event set that has zero probability. Let $\myF_k$  the $\sigma$-field
generated by $\omega_0,\omega_1,\cdots,\omega_{k-1}$, {\it i.e.,}
\begin{equation}\begin{array}{l}
\nonumber
\myF_k=\sigma\{\omega_0,\omega_1,\cdots,\omega_{k-1}\}\vee\myNz,k\in\myN.
\end{array}\end{equation}
and  $\myF_0=\{\emptyset,\Omega\}$($\emptyset$ is the empty set,
$\Omega$ is the sample space). Obviously, $\myF_{k-1}\subset\myF_k$,
and we set $\myFb=\{\myF_k\}_{k\in\myN}$. Now, we first review some
results on the conditional expectation which will be used latter.
The following lemma is the special case of Theorem 6.4 in
\cite{Kallenberg2002}.
\begin{lemma}\label{lem-property-conditional-expectation}
  If $\myR^d$-valued random variable $\eta$ is independent of  the $\sigma-$field $\myG\subset \myF$, and $\myR^n-$valued random
  variable $\xi$ is $\myG-$measurable, then, for every bounded function $f:\myR^n\times\myR^d\rightarrow\myR$, there exists
  $$
  \myE[f(\xi,\eta)|\myG]=\myE[f(x,\eta)]_{x=\xi}\quad \myas
  $$
\end{lemma}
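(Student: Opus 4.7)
The plan is to prove this by the standard measure-theoretic ``type argument'' (monotone class / Dynkin's $\pi$-$\lambda$ theorem), first establishing the identity for indicator functions of measurable rectangles, then extending by linearity and monotone convergence. Since $f$ is assumed bounded, no integrability issues arise and one gets the a.s.\ equality on the full product Borel $\sigma$-algebra $\mathcal{B}(\myR^n)\otimes\mathcal{B}(\myR^d)$.

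First I would define the ``frozen'' function $F(x):=\myE[f(x,\eta)]$ for $x\in\myR^n$ and check, via Fubini–Tonelli applied to the law of $\eta$, that $F$ is Borel measurable and bounded; this is what makes the substitution $F(\xi)$ a legitimate $\myG$-measurable random variable. Next, for the base case take $f=1_{A\times B}$ with $A\in\mathcal{B}(\myR^n)$, $B\in\mathcal{B}(\myR^d)$. Then $f(\xi,\eta)=1_A(\xi)1_B(\eta)$, and since $1_A(\xi)$ is $\myG$-measurable while $1_B(\eta)$ is independent of $\myG$,
\begin{equation}\nonumber
\myE[1_A(\xi)1_B(\eta)\mid \myG]=1_A(\xi)\,\myE[1_B(\eta)]=1_A(\xi)\,\myP(\eta\in B)\quad \myas,
\end{equation}
which coincides with $F(\xi)$ for this particular $f$. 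Thus the identity holds on the $\pi$-system of measurable rectangles.

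Then I would let $\myH$ denote the collection of Borel sets $C\subset\myR^n\times\myR^d$ for which the identity in Lemma \ref{lem-property-conditional-expectation} is valid with $f=1_C$. A routine verification, using linearity of conditional expectation for complements and monotone convergence (for conditional expectation) for countable disjoint unions, shows $\myH$ is a $\lambda$-system containing the above $\pi$-system of rectangles, so by Dynkin's theorem $\myH=\mathcal{B}(\myR^n)\otimes\mathcal{B}(\myR^d)$. Linearity in $f$ extends the identity to nonnegative simple functions, and an approximation of a bounded measurable $f$ by simple functions (splitting $f=f^+-f^-$ and invoking dominated convergence for conditional expectation, with the constant bound $\sup|f|$ as integrable dominator) yields the result in full generality.

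The main hurdle is purely bookkeeping rather than a deep difficulty: one must be careful that the ``frozen'' integration $x\mapsto\myE[f(x,\eta)]$ produces a Borel-measurable function so that the substitution $x=\xi$ is well defined pointwise $\myas$, and one must apply dominated/monotone convergence in the \emph{conditional} sense (which is where the boundedness of $f$ is used, to supply a uniform integrable majorant). Since the claim is stated as a special case of Theorem 6.4 in \cite{Kallenberg2002}, the argument above is exactly the template used there, and nothing further is required.
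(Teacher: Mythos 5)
Your argument is correct and is essentially the same monotone-class/Dynkin $\pi$--$\lambda$ proof that underlies Theorem 6.4 of Kallenberg, which is all the paper itself offers: it states the lemma as a special case of that theorem and gives no independent proof. The only delicate points --- Borel measurability of $x\mapsto\myE[f(x,\eta)]$ via Fubini and the countable accumulation of null sets in the limiting steps --- are both handled in your write-up, so nothing further is needed.
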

We firstly retrospect the stability theory for the following
discrete-time stochastic system
\begin{equation}\begin{array}{l}\label{eq-system-pre}
  \left\{\begin{array}{l}
    x_{k+1}=F_k(x_k,\omega_k),\\
    x_0\in\myR^n,
  \end{array}
  \right.
\end{array}\end{equation}
where $F_k:\myR^n\times\myR^d\rightarrow\myR^n$ is a measurable
function with $F_k(0,\cdot)\equiv0$. From the definition of system
\eqref{eq-system-pre}, it is easy to see that the solution $x_k$ is
$\myF_k-$adapted.
 Denote $x(k;s,x)$ or $x_k^{s,x}$ the solution of \eqref{eq-system-pre}  at time $k$ with the initial  state  $x\in\myR^n$  starting at $s\in\myN$, where $k\geq s$.

\begin{definition}
 The equilibrium solution $x_k\equiv0$ of \eqref{eq-system-pre} is said to be

(1) almost surely asymptotically stable, if, for all $x_0\in\myR^n$,
$s\geq0$
  \begin{equation}
   \myP\bigg\{\lim\limits_{k\rightarrow\infty}x(k;s,x_0)=0\bigg\}=1.
  \end{equation}

(2) asymptotically $p$-stable, if
\begin{equation}
  \lim\limits_{k\rightarrow \infty}\myE[|x(k;s,x_0)|^p]=0;
\end{equation}
\end{definition}
The following lemma is the LaSalle-type  theorem for the
discrete-time stochastic system \eqref{eq-system-pre}; see
\cite{ZhangLinChen2017} for details.
\begin{lemma}\label{th-LaSalle}
  Suppose $W:\myR^n\rightarrow \myR^+$ is a positive function and $V_k:\myR^n\rightarrow \myR^+$, $k\in\myN$, are  the Lyapunov
  functions satisfying
  \begin{equation}
  \begin{array}{l}
    \myE[V_{k+1}(F_k(x,\omega_k))]-V_k(x)\leq \gamma_k-W(x),\quad\forall x\in\myR^n, k\in\myN,
  \end{array}
  \label{eq-Lyapunov-inequality}
  \end{equation}
  $$
  \sum\limits_{k=0}^\infty\gamma_k< \infty
  $$
  and
  \begin{equation}\begin{array}{l}\label{eq-V-inf-infty}
    \liminf\limits_{|x|\rightarrow\infty}\inf\limits_{k\in\myN}V_k(x)=\infty.
  \end{array}\end{equation}
  $\{x_k\}_{k\in \myN}$ is the solution sequence of
  \eqref{eq-system-pre}.
  Then
  $$\lim\limits_{k\rightarrow\infty}V_k(x_k) \quad\mbox{exists and is finite almost surely,}$$
  and
  $$\lim\limits_{k\rightarrow\infty}W(x_k)=0\quad a.s..$$
  \end{lemma}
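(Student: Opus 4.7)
The plan is to transform the pointwise Lyapunov-type inequality \eqref{eq-Lyapunov-inequality} into a one-step conditional inequality along the trajectory, then compensate for the slack term $\gamma_k$ and the dissipation term $W(x_k)$ so as to construct a nonnegative supermartingale, and finally read off both conclusions from Doob's convergence theorem. Summability of $\{\gamma_k\}$ is what allows the compensation; nonnegativity of $W$ together with $\sum_k W(x_k)<\infty$ a.s.\ is what yields $W(x_k)\to 0$ a.s.

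First, I would fix $k$ and condition on $\myF_k$. Since $x_k$ is $\myF_k$-measurable and $\omega_k$ is independent of $\myF_k$, Lemma~\ref{lem-property-conditional-expectation} applied with $\xi=x_k$, $\eta=\omega_k$, and $f=V_{k+1}\circ F_k$ (after a standard truncation, since $V_{k+1}\geq 0$ need not be bounded) gives
$$
\myE[V_{k+1}(x_{k+1})\mid \myF_k] \;=\; \myE[V_{k+1}(F_k(x,\omega_k))]\big|_{x=x_k} \;\leq\; V_k(x_k)+\gamma_k-W(x_k) \quad \myas
$$
where the second step is \eqref{eq-Lyapunov-inequality} evaluated at $x=x_k$. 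Next, set $\Gamma_k := \sum_{j=k}^{\infty}\gamma_j$, which is well-defined, nonnegative, and decreases to $0$, and define the compensated process
$$
Z_k \;:=\; V_k(x_k)\;+\;\sum_{j=0}^{k-1} W(x_j)\;+\;\Gamma_k, \qquad k\in\myN.
$$
Each $Z_k$ is nonnegative and $\myF_k$-measurable, and a one-line computation using the conditional inequality above gives $\myE[Z_{k+1}\mid\myF_k]\leq Z_k$, so $\{Z_k\}$ is a nonnegative supermartingale with $\myE Z_0 = V_0(x_0)+\Gamma_0<\infty$. Doob's convergence theorem then delivers a finite random variable $Z_\infty$ with $Z_k\to Z_\infty$ a.s.

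To conclude, I would argue as follows. Since $\Gamma_k\to 0$, the partial sums $S_k:=\sum_{j=0}^{k-1}W(x_j)$ satisfy $V_k(x_k)+S_k\to Z_\infty$ a.s. Because $S_k$ is nondecreasing, nonnegative, and bounded above by the a.s.\ convergent (hence a.s.\ bounded) sequence $V_k(x_k)+S_k$, it converges a.s.\ to a finite limit; subtracting then shows $V_k(x_k)=(V_k(x_k)+S_k)-S_k$ converges a.s.\ to a finite limit, which is the first claim. Finiteness of the total sum $\sum_{j=0}^{\infty}W(x_j)$ together with $W\geq 0$ forces $W(x_k)\to 0$ a.s., giving the second claim. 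The coercivity hypothesis \eqref{eq-V-inf-infty} is not strictly needed for the two statements above; it would enter if one additionally wished to rule out $|x_k|\to\infty$ and thereby conclude boundedness of the trajectory. The main technical wrinkle I anticipate is the invocation of Lemma~\ref{lem-property-conditional-expectation} with the possibly unbounded nonnegative composite $V_{k+1}\circ F_k$ in place of the bounded $f$ stated there; this is handled by a routine truncation plus monotone convergence, after which the remainder of the argument is essentially bookkeeping around the supermartingale identity.
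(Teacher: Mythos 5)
Your proposal is correct: the compensated process $Z_k=V_k(x_k)+\sum_{j=0}^{k-1}W(x_j)+\sum_{j\geq k}\gamma_j$ is indeed a nonnegative supermartingale (integrability propagating by induction from $\myE Z_0<\infty$), both conclusions follow from Doob's convergence theorem exactly as you describe, and your remark that the coercivity condition \eqref{eq-V-inf-infty} is not needed for these two conclusions but only for the subsequent corollary is also accurate. Note that the paper itself offers no proof of this lemma --- it is quoted from \cite{ZhangLinChen2017} --- and the argument there is the same Robbins--Siegmund-type supermartingale compensation, so your route coincides with the intended one.
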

Under the condition that $W$ is proper and continuous positive
definite, the following corollary can be obtained directly by
LaSalle-type theorem.
\begin{corollary}
  Suppose there exist a proper and continuous positive definite function   $W$ and a Lyapunov function sequence $\{V_k,k\in\myN\}$ satisfying the conditions of Lemma \ref{th-LaSalle},  then
  $$
  \lim\limits_{k\rightarrow\infty}x_k=0\quad \mbox{a.s..}
  $$
\end{corollary}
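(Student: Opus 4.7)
The plan is to bootstrap from the conclusion of Lemma \ref{th-LaSalle}, which already delivers $\lim_{k\to\infty}W(x_k)=0$ almost surely, and then use the qualitative properties of $W$ (continuous, positive definite, proper) to upgrade this scalar convergence to convergence of $x_k$ itself to $0$. No new stochastic estimate is needed: the work is essentially deterministic, carried out pathwise on a probability-one event.

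First I would invoke Lemma \ref{th-LaSalle} to fix a set $\Omega_0\in\myF$ with $\myP(\Omega_0)=1$ on which $W(x_k(\omega))\to 0$ as $k\to\infty$. Working with a fixed $\omega\in\Omega_0$, I would then use the properness of $W$ (that is, $W(x)\to\infty$ as $|x|\to\infty$) to show that the sequence $\{x_k(\omega)\}$ is bounded: pick any $c>0$, and properness yields $R>0$ such that $|x|>R$ implies $W(x)>c$; once $k$ is large enough that $W(x_k(\omega))<c$, we have $|x_k(\omega)|\le R$. So all but finitely many terms of the path lie in a compact set, and the whole path is bounded.

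Next I would argue by subsequences. Let $x^*$ be any limit point of $\{x_k(\omega)\}$ along some subsequence $x_{k_j}(\omega)\to x^*$. Continuity of $W$ gives $W(x^*)=\lim_{j\to\infty}W(x_{k_j}(\omega))=0$, and positive-definiteness of $W$ (so $W(x)=0$ only at $x=0$) forces $x^*=0$. Since every subsequence of the bounded sequence $\{x_k(\omega)\}$ has a further convergent sub-subsequence, and every such limit must be $0$, the entire sequence converges: $x_k(\omega)\to 0$. As this holds for every $\omega\in\Omega_0$, we conclude $x_k\to 0$ almost surely.

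The only subtle point, and therefore the one to state carefully rather than a genuine obstacle, is to record precisely what is meant by ``proper positive definite continuous'': properness provides the compactness needed for Bolzano--Weierstrass on each sample path, continuity lets us pass $W$ through the subsequential limit, and positive-definiteness identifies the unique zero of $W$. Routine approximations (e.g.\ diagonal arguments or $\varepsilon$-estimates of the form ``for every $\varepsilon>0$ eventually $|x_k(\omega)|\le\varepsilon$'') are straightforward once boundedness and the sub-subsequence principle are in place, so I would omit their details.
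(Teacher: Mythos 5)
Your proposal is correct and is exactly the argument the paper intends: it cites Lemma \ref{th-LaSalle} to get $W(x_k)\to 0$ a.s.\ and leaves the remaining step (properness gives pathwise boundedness, continuity plus positive definiteness force every subsequential limit to be $0$) as immediate, which is precisely what you have written out. No gaps.
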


\section{A discrete-time version of the bounded real lemma\label{sec:EDEO}}

Now we consider the discrete-time system \eqref{eq-system-1}, where
$x(\cdot):=\{x_k\}_{k\in\myN}$ is the solution
 of \eqref{eq-system-1} with the initial state $x_0\in\myR^n$, $v(\cdot):=\{v_k\}_{k\in\myN}\in\myLiv$  is the
 exogenous disturbances to be rejected,
 and $z(\cdot):=\{z_k\}_{k\in\myN}$ is the regulated  output.
 Without loss of generality, we also assume that $0$ is the equilibrium  of $f$ and $m$, i.e.,
 $f(0,\cdot)\equiv0,m(0)=0$. In this section, we denote $x(k;s,x_s,v)$ or $x_k^{s,x_s,v}$ the solution of \eqref{eq-system-1}
 with the initial state $x_s\in\myR^n$ and external disturbance  $v(\cdot)$ starting at $s\in\myN$, and denote
 the controlled  output as $z(k;s,x_s,v)$ or $z_k^{s,x_s,v}$  corresponding to
 $x_k^{s,x_s,v}$ for $k\geq s$. Throughout the paper, we assume that all  random
variables such as $V(x_k)$ and $V_k(x_k)$ are
 elements in $L^1(\Omega,\myF,\myP)$, i.e., $\myE[V(x_k)]<\infty$ and $\myE[V_k(x_k)]<\infty$.

\begin{definition}\label{def-internal-stable}
  The system \eqref{eq-system-1} is called internally stable if there exists
  $c>0$ such that
  \begin{equation*}
    \sum\limits_{k=0}^\infty \myE|z_k^{0,x_0,0}|^2\leq c|x_0|^2,\ \ x_0\in \myR^n,
  \end{equation*}
where $z_k^{0,x_0,0}=m(x_k^{0,x_0,0})$.
\end{definition}
For every positive function $V:\myR^n\rightarrow\myR^+$ and
disturbance $v\in\myR^{n_v}$, we define the difference operator
$\Delta_v$ of system \eqref{eq-system-1} as
$$
\Delta_vV(x):=\myE[V(f(x,\omega_k)+g(x,\omega_k)v)]-V(x),x\in\myR^n.
$$
 Because we assume that $\{\omega_k\}_{k\in\myN}$ is independently  identically distributed,
 so
$$
\myE[V(f(x,\omega_k)
+g(x,\omega_k)v)]=\myE[V(f(x,\omega_{k+1})+g(x,\omega_{k+1})v)],
$$
i.e., the difference operator $\Delta_v$ is identical for all
$k\in\myN$. Specially, for $v(\cdot)\equiv0$, the operator
$\Delta_0$ reduces to
$$
\Delta_0V(x):=\myE[V(f(x,\omega_k))]-V(x),x\in\myR^n.
$$
\begin{lemma}\label{lem-stability-lemma}
  Suppose there exist  a positive function $V:\myR^n\rightarrow\myR^+$, and two positive constants $c_1>0$  and $c_2>0$, such that
  \begin{eqnarray}
    \label{eq-cond-Delta-V-1} &&\Delta_0V(x)\leq -c_1 |m(x)|^2,\\
  \label{eq-cond-Delta-V-01}&& V(x)\leq c_2|x|^2,
  \end{eqnarray}
  then system \eqref{eq-system-1} is internally stable. Moreover, if $|m(x)|$ is positive
   definite, then for every $x_0\in\myR^n$, we have
  \begin{equation}\label{eq-internal-almost-surely}
    \lim\limits_{k\rightarrow\infty} x_k^{0,x_0,0}=0\quad\mbox{a.s..}
  \end{equation}
\end{lemma}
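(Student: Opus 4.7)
The plan is to treat $V(x_k)$ evaluated along the unforced trajectory as a nonnegative supermartingale and read off both conclusions from that structure. Set $v(\cdot)\equiv 0$, so $x_{k+1}=f(x_k,\omega_k)$ and $z_k=m(x_k)$. Since $x_k$ is $\myF_k$-measurable and $\omega_k$ is independent of $\myF_k$, Lemma \ref{lem-property-conditional-expectation} yields
\[
\myE[V(x_{k+1})\mid\myF_k]=\myE[V(f(x,\omega_k))]\big|_{x=x_k}=V(x_k)+\Delta_0V(x_k),
\]
so the hypothesis \eqref{eq-cond-Delta-V-1} gives the key one-step estimate
\[
\myE[V(x_{k+1})\mid\myF_k]\le V(x_k)-c_1|m(x_k)|^2 \qquad\text{a.s.}
\]

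For internal stability, I would take unconditional expectations of this inequality, telescope from $k=0$ to $N$, and use $V\ge 0$ together with \eqref{eq-cond-Delta-V-01} to obtain
\[
c_1\sum_{k=0}^{N}\myE|m(x_k)|^2\le V(x_0)-\myE[V(x_{N+1})]\le V(x_0)\le c_2|x_0|^2.
\]
Letting $N\to\infty$ gives $\sum_{k=0}^{\infty}\myE|z_k^{0,x_0,0}|^2\le (c_2/c_1)|x_0|^2$, so the constant $c=c_2/c_1$ works in Definition \ref{def-internal-stable}.

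For the almost sure claim, I would like to apply the LaSalle-type Lemma \ref{th-LaSalle} with $V_k\equiv V$, $W(x)=c_1|m(x)|^2$, and $\gamma_k\equiv 0$, since the supermartingale inequality above is exactly \eqref{eq-Lyapunov-inequality} under these choices. This immediately yields $W(x_k)\to 0$, i.e.\ $|m(x_k)|\to 0$ a.s. The positive definiteness of $|m(\cdot)|$ (together with continuity inherited from $m$) then lets us conclude $x_k^{0,x_0,0}\to 0$ a.s. As a safety net, should the coercivity condition \eqref{eq-V-inf-infty} not be directly available from the hypothesis $V(x)\le c_2|x|^2$, I would instead argue directly: the bound $\sum_k\myE|m(x_k)|^2<\infty$ combined with Fubini/Tonelli forces $\sum_k|m(x_k)|^2<\infty$ a.s., so $|m(x_k)|\to 0$ a.s., and the nonnegative supermartingale $V(x_k)$ converges a.s.\ to some finite limit by Doob's theorem, which together with the positive definiteness of $|m|$ pins the trajectory down to $0$.

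The subtlest step is the very last one: passing from $|m(x_k)|\to 0$ a.s.\ to $x_k\to 0$ a.s. The conditions as stated provide an upper bound $V\le c_2|x|^2$ but no explicit radial unboundedness, so Lemma \ref{th-LaSalle}'s coercivity hypothesis \eqref{eq-V-inf-infty} is not automatically met by $V$ itself. The resolution must come from the word "positive definite" being interpreted in the Lyapunov sense (continuous, zero only at the origin, and proper); granting that, the conclusion follows. Otherwise some boundedness-in-probability argument for the trajectory would have to be inserted, which I expect to be the main technical point worth watching in the author's proof.
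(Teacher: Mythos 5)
Your proof is correct and follows essentially the same route as the paper's: the identical telescoping of $\myE[V(x_{k+1})]-\myE[V(x_k)]\le -c_1\myE|m(x_k)|^2$ to get internal stability with $c=c_2/c_1$, followed by an appeal to the LaSalle-type Lemma \ref{th-LaSalle} with $W(x)=c_1|m(x)|^2$ for the almost sure convergence. The coercivity caveat you flag is legitimate, but the paper's own proof is no more careful --- it simply invokes Lemma \ref{th-LaSalle} and the positive definiteness of $W$, implicitly reading ``positive definite'' in the proper, radially unbounded sense required by its Corollary 2.1.
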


\begin{proof}
Since
  $$
  \myE[V(x_{k+1})]-\myE[V(x_k)]=\myE\bigg[\myE\big[V(f(x_k,\omega_k))|\myF_k\big]-V(x_k)\bigg],
  $$
$x_k$ is $\myF_k$-measurable and $\omega_k$ is independent of
$\myF_k$, by Lemma \ref{lem-property-conditional-expectation}, we
have
  \begin{eqnarray*}
   \myE[V(x_{k+1})]-\myE[V(x_k)]&=&\myE\bigg\{\bigg[\myE[V(f(x,\omega_k))]-V(x)\bigg]_{x=x_k}\bigg\}\\
  &=&\myE\big[\Delta_0V(x_k)\big].
  \end{eqnarray*}
By condition  \eqref{eq-cond-Delta-V-1}, it shows that
  \begin{eqnarray*}
  \myE[V(x_{k+1})]-\myE[V(x_k)]\leq -c_1\myE|m(x_k)|^2.
  \end{eqnarray*}
 For every $N\in\myN$, taking the summation  on both sides of the above inequality for $k$ from $0$ to $N$, we obtain that
  \begin{eqnarray*}
    \myE[V(x_{N+1})]-\myE[V(x_0)]\leq-c_1\sum\limits_{k=0}^N\myE|m(x_k)|^2.
  \end{eqnarray*}
 Since $V(x)$ is a positive function, the above inequality  yields
   \begin{eqnarray}\label{eq-proof-lemma-internally-stable}
    \sum\limits_{k=0}^N\myE|m(x_k)|^2\leq\frac{1}{c_1}\myE[V(x_0)].
  \end{eqnarray}
  In view of  \eqref{eq-cond-Delta-V-01}, by letting $N\rightarrow\infty$ on the left-hand side  of \eqref{eq-proof-lemma-internally-stable},
  we have
    \begin{eqnarray}
    \sum\limits_{k=0}^\infty\myE|m(x_k)|^2\leq \frac{1}{c_1}V(x_0)\leq \frac{c_2}{c_1}|x_0|^2.
    \label {weihaizhang add}
    \end{eqnarray}
  Since $|z_k^{0,x_0,0}|=|m(x_k)|$, the internal stability is shown from (\ref{weihaizhang add}).

  As far as \eqref{eq-internal-almost-surely}, it can be obtained directly by Lemma \ref{th-LaSalle} and the positive definiteness  of the function $W(x)=c_1|m(x)|^2$.
\end{proof}

Now, we will show the converse of Lemma \ref{lem-stability-lemma}
which is characterized by the following lemma.
\begin{lemma}\label{lem-stability-lemma-inverse}
  Suppose system \eqref{eq-system-1} is internally stable. Then there exists a positive function $V:\myR^n\rightarrow \myR^+$ satisfying \eqref{eq-cond-Delta-V-1} and \eqref{eq-cond-Delta-V-01}.
\end{lemma}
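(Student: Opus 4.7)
The plan is to construct $V$ explicitly as the ``cost-to-go'' function associated with the undisturbed system, namely
$$
V(x):=\sum_{k=0}^\infty \myE\big|m(x_k^{0,x,0})\big|^2,\quad x\in\myR^n,
$$
and then verify that this $V$ satisfies both \eqref{eq-cond-Delta-V-1} and \eqref{eq-cond-Delta-V-01}. Internal stability immediately gives $V(x)\le c|x|^2$, so condition \eqref{eq-cond-Delta-V-01} holds with $c_2=c$, and in particular $V$ is finite-valued and positive. Positivity is obvious from the definition.

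The heart of the argument is a one-step ``dynamic programming'' identity
$$
V(x)=|m(x)|^2+\myE\big[V(f(x,\omega_0))\big],
$$
from which \eqref{eq-cond-Delta-V-1} follows immediately with $c_1=1$. To establish it, I would use the i.i.d.\ assumption on $\{\omega_k\}$ combined with Lemma \ref{lem-property-conditional-expectation}. First observe that, for each $j\ge 0$, the map sending $(\omega_0,\omega_1,\dots,\omega_{j-1})$ to $x_j^{0,x,0}$ is the same measurable map (in $x$ and the shocks) regardless of the time index, because the dynamics $x_{k+1}=f(x_k,\omega_k)$ are time-homogeneous. Therefore, restarting at time $1$ with state $y=f(x,\omega_0)$ and using $\omega_1,\omega_2,\dots$ — which are independent of $\myF_1$ and identically distributed with $\omega_0$ — yields a trajectory whose distribution coincides with $\{x_j^{0,y,0}\}_{j\ge 0}$. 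Applying Lemma \ref{lem-property-conditional-expectation} to each term gives
$$
\myE\big[\,|m(x_{1+j}^{0,x,0})|^2\,\big|\myF_1\big]=\myE\big[|m(x_j^{0,y,0})|^2\big]_{y=f(x,\omega_0)}\quad\myas,
$$
and taking expectations, then summing over $j\ge 0$ (Fubini/Tonelli is justified by the nonnegativity of the summands) gives $\sum_{j=0}^\infty \myE|m(x_{1+j}^{0,x,0})|^2=\myE[V(f(x,\omega_0))]$. Combining with $\sum_{k=0}^\infty \myE|m(x_k^{0,x,0})|^2=|m(x)|^2+\sum_{j=0}^\infty\myE|m(x_{1+j}^{0,x,0})|^2$ yields the Bellman identity.

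Substituting into the definition of $\Delta_0$ gives $\Delta_0 V(x)=\myE[V(f(x,\omega_0))]-V(x)=-|m(x)|^2$, which is \eqref{eq-cond-Delta-V-1} with $c_1=1$.

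The main obstacle is the rigorous handling of the infinite sum under conditional expectation and the appeal to time-homogeneity of the dynamics; here Tonelli's theorem (legitimate since all terms are nonnegative) together with Lemma \ref{lem-property-conditional-expectation} makes the manipulation clean. No other technical subtlety arises, since the internal stability bound $V(x)\le c|x|^2$ both ensures finiteness of $V$ and directly supplies \eqref{eq-cond-Delta-V-01}.
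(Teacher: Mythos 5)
Your proposal is correct and follows essentially the same route as the paper: both construct $V$ as the cost-to-go sum $\sum_{k}\myE|m(x_k^{0,x,0})|^2$, derive the identity $\Delta_0V(x)=-|m(x)|^2$ from time-homogeneity, the i.i.d.\ noise, the flow property of the solution, and Lemma \ref{lem-property-conditional-expectation}, and then read off \eqref{eq-cond-Delta-V-01} directly from the internal stability bound. The only cosmetic difference is that the paper first introduces the time-indexed family $V_k$ and proves $V_k\equiv V_{k+1}$ before setting $V=V_0$, whereas you work with $V=V_0$ from the start and justify the restart step by conditioning on $\myF_1$ with Tonelli.
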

\begin{proof}
For every $x\in\myR^n$, define
\begin{equation}\label{eq-proof-define-V-k}
  V_k(x)=\sum\limits_{i=k}^\infty\myE|m(x^{k,x,0}_i)|^2.
\end{equation}
Because, for every $k\in\myN$, the following fact holds:
$$
x_{k+1}^{k,x,0}=f(x,\omega_k),
$$
which implies that
$$
x_i^{k,x,0}=x_i^{k+1,x_{k+1}^{k,x,0},0}=x_i^{k+1,f(x,\omega_k),0},\quad
i> k.
$$
Using the above property for the solution of system
\eqref{eq-system-1}, we have
\begin{eqnarray}
  \nonumber\myE[V_{k+1}(f(x,\omega_k))]-V_k(x)&=&\myE\left\{\left[\sum\limits_{i=k+1}^\infty\myE|m(x^{k+1,y,0}_i)|^2\right]_{y=f(x,\omega_k)}\right\}\\
 && \nonumber-\sum\limits_{i=k}^\infty\myE|m(x^{k,x,0}_i)|^2\\
  \nonumber&=&\sum\limits_{i=k+1}^\infty\myE|m(x^{k,x,0}_i)|^2-\sum\limits_{i=k}^\infty\myE|m(x^{k,x,0}_i)|^2\\
 \nonumber &=&-\myE|m(x^{k,x,0}_k)|^2=-|m(x)|^2.
\end{eqnarray}
Hence, we obtain  the following equations  for all $k\in\myN$:
\begin{eqnarray}\label{eq-V_k-Delta}
  \myE[V_{k+1}(f(x,\omega_k))]-V_k(x)=-|m(x)|^2,  \ k\in\myN.
\end{eqnarray}
Below, we prove that for any  $k\in\myN$, the following holds:
$$
V_k(x)=V_{k+1}(x), \ \ x\in\myR^n.
$$
Because, for every $k\in\myN$, $x_{k+1}^{k,x,0}=f(x,\omega_k)$,
$x_{k+2}^{k+1,x,0}=f(x,\omega_{k+1})$, and $\omega_k$ and
$\omega_{k+1}$ are  independently identically distributed, which
implies that $x_{k+1}^{k,x,0}$ and $x_{k+2}^{k+1,x,0}$ are also
identically distributed. So
$$
\myE |m(x_{k+1}^{k,x,0})|^2=\myE |m(x_{k+2}^{k+1,x,0})|^2.
$$
Similarly, the following relationship holds:
$$
\myE |m(x_{i}^{k,x,0})|^2=\myE |m(x_{i+1}^{k+1,x,0})|^2, \
i=k,k+1,\cdots
$$
By the definition of $V(x)$ in \eqref{eq-proof-define-V-k}, we have
$$
V_k(x)=V_{k+1}(x), \ \forall k\in\myN,
$$
which implies that $V_k(x)$ is identical for all $k\in\myN$.
Therefore, if  we let
\begin{equation}\label{eq-define-V}
V(x)=\sum\limits_{i=0}^\infty\myE|m(x^{0,x,0}_i)|^2,
\end{equation}
then,  by the above discussion, it follows that $V(x)=V_k(x),\forall
k\in\myN$. As so, the  equation \eqref{eq-V_k-Delta} reduces to
\begin{equation}\label{eq-proof-lemma-stability-inverse-m(x)}
\Delta_0 V(x)=-|m(x)|^2.
\end{equation}
Taking $c_1=1$, we have proved that $V$ defined by
\eqref{eq-define-V} satisfies \eqref{eq-cond-Delta-V-1}.

As far as $V(x)$ satisfies \eqref{eq-cond-Delta-V-01}, it can be
obtained directly by the internal stability of system
\eqref{eq-system-1} and Definition \ref{def-internal-stable}.
\end{proof}

By the equations \eqref{eq-define-V} and
\eqref{eq-proof-lemma-stability-inverse-m(x)}, we  have the
following corollary.
\begin{corollary}
Suppose system \eqref{eq-system-1} is internally stable. Then there
exists a positive function $V:\myR^n\rightarrow \myR^+$ satisfying
\eqref{eq-proof-lemma-stability-inverse-m(x)}. Moreover, there also
exists
\begin{eqnarray}\label{eq-V-geq}
V(x)\geq |m(x)|^2.
\end{eqnarray}
\end{corollary}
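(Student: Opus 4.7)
The plan is to recycle the explicit Lyapunov function built in the proof of Lemma \ref{lem-stability-lemma-inverse} rather than construct a new one, and to read off both assertions directly from its series representation.

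First, I would take the very function
\[
V(x)=\sum_{i=0}^\infty \myE|m(x_i^{0,x,0})|^2
\]
introduced in \eqref{eq-define-V}. Internal stability of \eqref{eq-system-1} guarantees that this series converges and satisfies $V(x)\leq c_2|x|^2$, so $V$ is a well-defined positive function on $\myR^n$. The identity $\Delta_0 V(x)=-|m(x)|^2$ is already established inside the proof of Lemma \ref{lem-stability-lemma-inverse} as equation \eqref{eq-proof-lemma-stability-inverse-m(x)}; it follows from the telescoping of the series and the shift-invariance $V_k\equiv V_{k+1}$, which in turn rests on the i.i.d.\ assumption on $\{\omega_k\}_{k\in\myN}$. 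So the first assertion of the corollary requires no work beyond pointing at this construction.

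For the inequality $V(x)\geq |m(x)|^2$, I would simply isolate the $i=0$ term in the series. Since $x_0^{0,x,0}=x$ by definition, that term equals $|m(x)|^2$, and every remaining summand is nonnegative, whence
\[
V(x)= |m(x)|^2+\sum_{i=1}^\infty \myE|m(x_i^{0,x,0})|^2\geq |m(x)|^2.
\]

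There is no serious obstacle here: the corollary is essentially a direct reading of the explicit formula for $V$ constructed in the previous lemma, and the only additional content beyond Lemma \ref{lem-stability-lemma-inverse} is the observation that the initial-state term in the defining sum already dominates $|m(x)|^2$.
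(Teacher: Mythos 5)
Your proposal is correct and matches the paper's own proof: both reuse the function $V$ from \eqref{eq-define-V} constructed in Lemma \ref{lem-stability-lemma-inverse} and obtain \eqref{eq-V-geq} by dropping all but the $i=0$ term of the series, which equals $|m(x_0^{0,x,0})|^2=|m(x)|^2$. No gaps.
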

\begin{proof}
Obviously, it only remains to show  that \eqref{eq-V-geq}. By
definition of $V(x)$  in  \eqref{eq-define-V}, we have
$$
V(x)\geq |m(x_0^{0,x,0})|^2.
$$
In view of  the fact that $m(x_0^{0,x,0})=m(x)$, \eqref{eq-V-geq} is
hence  proved.
\end{proof}
Combining Lemma \ref{lem-stability-lemma} and Lemma
\ref{lem-stability-lemma-inverse}, the following proposition
\ref{prop-stability} is obtained, which presents  a necessary  and
sufficient  condition  of the  internal stability of system
\eqref{eq-system-1}. Denote
  \begin{eqnarray}
    \myH_0(V(x)):=\myE[V(f(x,\omega_0))]-V(x)+|m(x)|^2.
  \end{eqnarray}
\begin{proposition}\label{prop-stability}
  System \eqref{eq-system-1} is internally stable if and only if there exist a positive function $V:\myR^n\rightarrow\myR^+$ and a positive constant $c_2>0$
 such that
  \begin{eqnarray}
  \label{eq-cond-H-0-1}&&V(x)\leq c_2|x|^2, \ \ \forall x\in\myR^n, \\
  \label{eq-cond-H-0-2} && \myH_0(V(x))\leq0, \ \  \forall
  x\in\myR^n.
  \end{eqnarray}
\end{proposition}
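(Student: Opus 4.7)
The proposition is essentially a repackaging of the two lemmas that precede it, so the plan is to verify that the condition $\myH_0(V(x))\leq 0$ is just a rewriting of the difference inequality \eqref{eq-cond-Delta-V-1} with constant $c_1=1$, and then to invoke Lemma \ref{lem-stability-lemma} and Lemma \ref{lem-stability-lemma-inverse} in the two directions.

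For the sufficiency direction, I would start from a $V$ satisfying \eqref{eq-cond-H-0-1} and \eqref{eq-cond-H-0-2} and simply rearrange the definition of $\myH_0$ to obtain
\[
\Delta_0 V(x)=\myE[V(f(x,\omega_0))]-V(x)\leq -|m(x)|^2,
\]
which is exactly hypothesis \eqref{eq-cond-Delta-V-1} with $c_1=1$. Meanwhile \eqref{eq-cond-H-0-1} is literally \eqref{eq-cond-Delta-V-01}. Then Lemma \ref{lem-stability-lemma} applies verbatim and yields internal stability.

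For the necessity direction, I would assume internal stability and use the explicit candidate constructed in Lemma \ref{lem-stability-lemma-inverse}, namely
\[
V(x)=\sum_{i=0}^{\infty}\myE|m(x_i^{0,x,0})|^2.
\]
The proof of Lemma \ref{lem-stability-lemma-inverse} (together with the corollary that follows it) already shows that this $V$ is a well-defined positive function satisfying $\Delta_0 V(x)=-|m(x)|^2$, which gives $\myH_0(V(x))=0\leq 0$, i.e.\ \eqref{eq-cond-H-0-2}. For the quadratic upper bound \eqref{eq-cond-H-0-1}, I would appeal directly to Definition \ref{def-internal-stable}: the defining inequality $\sum_{k=0}^\infty \myE|z_k^{0,x,0}|^2\leq c|x|^2$ for some $c>0$ is precisely $V(x)\leq c|x|^2$, so we may take $c_2=c$.

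There is no real obstacle here, and the proof is short; the only small point worth being careful about is to cite the corollary (or redo the one-line time-homogeneity argument inside the proof of Lemma \ref{lem-stability-lemma-inverse}) to justify that the summation $V(x)$ satisfies the equality $\Delta_0 V(x)=-|m(x)|^2$ rather than merely the inequality $\Delta_0 V(x)\leq -|m(x)|^2$, so that one gets the stronger equality $\myH_0(V(x))=0$ for free rather than needing an auxiliary construction.
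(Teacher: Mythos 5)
Your proposal is correct and follows exactly the route the paper intends: the paper gives no separate proof of Proposition \ref{prop-stability}, stating only that it is obtained by combining Lemma \ref{lem-stability-lemma} (sufficiency, with $c_1=1$) and Lemma \ref{lem-stability-lemma-inverse} (necessity, whose constructed $V$ satisfies $\Delta_0V(x)=-|m(x)|^2$ and inherits the quadratic bound from Definition \ref{def-internal-stable}). Your observation that the construction actually yields the equality $\myH_0(V(x))=0$ matches the paper's own Corollary following Lemma \ref{lem-stability-lemma-inverse}.
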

\begin{definition}
  The system \eqref{eq-system-1} is said to be externally stable or $l^2$-input-output stable if, for every $v(\cdot)\in \myLiv$,
  $$
 z(\cdot)=\{z_k^{0,0,v}\}_{k\in\myN}\in\myLiz,
  $$
  and there exists a positive real number $\gamma>0$ such that
  \begin{eqnarray}\label{eq-definition-external-z-norm}
    \|z(\cdot)\|_{l^2_\infty}\leq\gamma\|v(\cdot)\|_{l^2_\infty},
   \nonumber \forall v(\cdot)\in\myLiv,
  \end{eqnarray}
  or  equivalently,
    \begin{eqnarray}\label{eq-definition-external-sum-z}
    \sum\limits_{k=0}^\infty\myE[|z_k^{0,0,v}|^2]\leq\gamma^2\sum\limits_{k=0}^\infty\myE[|v_k|^2].
  \end{eqnarray}
\end{definition}
\begin{remark}
  Suppose $\gamma$ is a given positive real number. If inequality \eqref{eq-definition-external-z-norm} or \eqref{eq-definition-external-sum-z} holds,
  system \eqref{eq-system-1} is also said to have $l_2$-gain less than or equal to $\gamma$ \cite{LinByrnes1996}.  Moreover, suppose that system \eqref{eq-system-1}
  is externally stable. Define an  operator
\begin{eqnarray*}
  \myL:\myLiv\rightarrow\myLiz
\end{eqnarray*}
by
\begin{eqnarray*}
  \myL(v)=z(\cdot,0,0,v),v\in\myLiv,
\end{eqnarray*}
then operator $\myL$ is called the perturbation operator of
\eqref{eq-system-1}. Its norm is defined as
\begin{eqnarray}
  \|\myL\|=\sup\limits_{0\neq v(\cdot)\in\myLiv}\frac{\|z(\cdot,0,0,v)\|_{\myLiz}}{\|v\|_{\myLiv}}.
\end{eqnarray}
So, on  one hand, $\|\myL\|$ is a measure of the $l_2$-gain of
system \eqref{eq-system-1}, but on the other hand, it  is also a
measure of the worst case  effect that the stochastic disturbance
$v$ may have on the controlled output $z$. Therefore, it is
important to find a way to determine or estimate the norm
$\|\myL\|$.
\end{remark}

\begin{proposition}\label{prop-stable-external}
  Suppose, for $\gamma>0$, there exist   a convex positive function $V:\myR^n\rightarrow \myR^+$ and a  real number $\beta>1$, such that
 \begin{eqnarray}
    \label{eq-H-1-ineq-1} \myH_1(V(x),\beta):=\frac{1}{\beta}\myE[V(\beta f(x,\omega_0))]-V(x)+|m(x)|^2\leq 0, \\
   G_\beta(V(x))\leq\gamma^2, \forall x\in\myR^n, V(0)=0,\label{eq-H-1-ineq-2}
 \end{eqnarray}
 where $G_\beta(V(x))$ is defined by
 \begin{eqnarray}
    \label{eq-G-beta}
    G_\beta(V(x)):=\sup\limits_{0\neq v\in\myR^{n_u}}\left\{\frac{(\beta-1)}{\beta}
    \frac{\myE\big[V(\frac{\beta}{\beta-1}g(x,\omega_0)v)\big]}{|v|^2}+\frac{|m_1(x)v|^2}{|v|^2}\right\}.
 \end{eqnarray}
 Then $\|\myL\|\leq\gamma$. Moreover, if $V$ satisfies \eqref{eq-cond-Delta-V-01}, then system \eqref{eq-system-1} is also internally stable.
\end{proposition}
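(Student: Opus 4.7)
The proof splits naturally into two parts: first the $l^2$-gain bound $\|\myL\|\leq\gamma$, then the internal stability claim.

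For the gain bound, the plan hinges on the convex decomposition
\[
f(x,\omega_k)+g(x,\omega_k)v=\frac{1}{\beta}\cdot\bigl(\beta f(x,\omega_k)\bigr)+\frac{\beta-1}{\beta}\cdot\Bigl(\frac{\beta}{\beta-1}g(x,\omega_k)v\Bigr),
\]
where the weights $\frac{1}{\beta}$ and $\frac{\beta-1}{\beta}$ are nonnegative and sum to $1$ because $\beta>1$. Applying the convexity of $V$ and then taking expectations in $\omega_k$ gives
\[
\myE\bigl[V(f(x,\omega_k)+g(x,\omega_k)v)\bigr]\leq \frac{1}{\beta}\myE[V(\beta f(x,\omega_k))]+\frac{\beta-1}{\beta}\myE\!\left[V\!\Bigl(\tfrac{\beta}{\beta-1}g(x,\omega_k)v\Bigr)\right].
\]
Subtracting $V(x)$ from both sides and invoking \eqref{eq-H-1-ineq-1} and the definition \eqref{eq-G-beta} of $G_\beta$, together with \eqref{eq-H-1-ineq-2}, I expect to obtain the pointwise inequality
\[
\Delta_v V(x)\leq -|m(x)|^2-|m_1(x)v|^2+\gamma^2|v|^2\quad\text{for every }x\in\myR^n,\ v\in\myR^{n_v}.
\]

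The next step is to evaluate this along a trajectory of \eqref{eq-system-1} with $x_0=0$ and arbitrary $v(\cdot)\in\myLiv$. Since $x_k$ and $v_k$ are both $\myF_k$-measurable and $\omega_k$ is independent of $\myF_k$, Lemma \ref{lem-property-conditional-expectation} yields $\myE[V(x_{k+1})\mid\myF_k]-V(x_k)=\Delta_{v_k}V(x_k)$ almost surely, so taking unconditional expectations and using the bound above gives
\[
\myE[V(x_{k+1})]-\myE[V(x_k)]\leq -\myE|z_k|^2+\gamma^2\,\myE|v_k|^2.
\]
Summing from $0$ to $N$, using $V(x_0)=V(0)=0$ and the positivity $V\geq 0$, and letting $N\to\infty$ then produces \eqref{eq-definition-external-sum-z} and hence $\|\myL\|\leq\gamma$.

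For internal stability under the extra growth condition $V(x)\leq c_2|x|^2$, I set $v=0$. The convexity of $V$ together with $V(0)=0$ yields the one-line bound
\[
V\bigl(f(x,\omega_0)\bigr)=V\!\Bigl(\tfrac{1}{\beta}\cdot\beta f(x,\omega_0)+(1-\tfrac{1}{\beta})\cdot 0\Bigr)\leq \tfrac{1}{\beta}V(\beta f(x,\omega_0)),
\]
so taking expectations and combining with \eqref{eq-H-1-ineq-1} at $v=0$ produces $\Delta_0 V(x)\leq -|m(x)|^2$. Lemma \ref{lem-stability-lemma} then supplies internal stability with $c_1=1$.

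The main obstacle is purely conceptual rather than computational: spotting the correct convex combination so that the $x$-part carries weight $\frac{1}{\beta}$ and rescaling $\beta f$, while the $v$-part carries the complementary weight $\frac{\beta-1}{\beta}$ and rescaling $\frac{\beta}{\beta-1}g(x,\omega)v$. Once this decomposition is in place, the convexity of $V$ does all the separation work that completion of squares or a Taylor expansion of a quadratic $V$ would normally do, and the two hypotheses \eqref{eq-H-1-ineq-1}--\eqref{eq-H-1-ineq-2} align exactly with the two resulting terms. The only subsidiary technicality is the application of Lemma \ref{lem-property-conditional-expectation} to justify passing from $\Delta_v V(x)$ at a random $(x,v)=(x_k,v_k)$ to the conditional increment $\myE[V(x_{k+1})\mid\myF_k]-V(x_k)$, which is routine given the $\myF_k$-measurability of $(x_k,v_k)$ and the independence of $\omega_k$.
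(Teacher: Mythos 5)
Your proposal is correct and follows essentially the same route as the paper: the same convex splitting of $f+gv$ with weights $\frac{1}{\beta}$ and $\frac{\beta-1}{\beta}$, the same dissipation inequality $\Delta_v V(x)\leq -|z|^2+\gamma^2|v|^2$ summed along the zero-initial-state trajectory, and the same convexity trick ($V(\beta y)\geq\beta V(y)$ via $V(0)=0$) to pass from \eqref{eq-H-1-ineq-1} to $\Delta_0V(x)\leq -|m(x)|^2$ for internal stability. The only cosmetic difference is that you invoke Lemma \ref{lem-stability-lemma} directly where the paper routes through Proposition \ref{prop-stability}, which is the same inequality $\myH_0(V(x))\leq 0$ in different clothing.
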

\begin{proof}
Let $\alpha=1/\beta$, then $0<\alpha<1$. By the convexity of $V$, it
follows
  \begin{eqnarray*}
  \Delta_vV(x)&=&\myE[V(f(x,\omega_k)+g(x,\omega_k)v)]-V(x)\\
  &&= \myE\bigg[V(\alpha\frac{1}{\alpha}f(x,\omega_k)+(1-\alpha)\frac{1}{1-\alpha}g(x,\omega_k)v)\bigg]-V(x)\\
  &&\leq \alpha \myE\bigg[V(\frac{1}{\alpha}f(x,\omega_k))\bigg]+(1-\alpha)\myE\bigg[V(\frac{1}{1-\alpha}g(x,\omega_k)v)\bigg]\\
  &&\quad-V(x)+|m(x)|^2-\gamma^2|v|^2-|m(x)|^2+\gamma^2|v|^2.\\
  &&\leq\myH_1(V(x),\beta)+[G_\beta(V(x))-\gamma^2]|v|^2-|z|^2+\gamma^2|v|^2.
  \end{eqnarray*}
By conditions of \eqref{eq-H-1-ineq-1} and \eqref{eq-H-1-ineq-2}, it
follows that
  \begin{eqnarray*}
  &&\Delta_vV(x)\leq-|z|^2+\gamma^2|v|^2.
  \end{eqnarray*}
Denote $x_k$ the solution of \eqref{eq-system-1} with initial state
$x_0=0$ for $v(\cdot)\in\myLiv$, $z_k$ is the corresponding output.
Then, we have
\begin{eqnarray*}
  \left\{\myE[V(f(x,\omega_k)+g(x,\omega_k)v)]-V(x)\right\}_{x=x_k,v=v_k}\leq-|z_k|^2+\gamma^2|v_k|^2,
  \end{eqnarray*}
Since $x_k$ and $v_k$ are  $\myF_k$-measurable, by Lemma
\ref{lem-property-conditional-expectation}, the above inequality can
also be written as
\begin{eqnarray*}
  \myE[V(f(x_k,\omega_k)+g(x_k,\omega_k)v_k)|\myF_k]-V(x_k)
  \leq-|z_k|^2+\gamma^2|v_k|^2,
\end{eqnarray*}
i.e.
\begin{eqnarray*}
  && \myE[V(x_{k+1})|\myF_k]-V(x_k)\leq-|z_k|^2+\gamma^2|v_k|^2,
\end{eqnarray*}
Taking the mathematical expectation on both sides of the above
inequality, we have
\begin{eqnarray}\label{eq-proof-prop-1}
  \myE[V(x_{k+1})]-\myE[V(x_{k})]\leq-\myE[|z_k|^2]+\gamma^2\myE[|v_k|^2].
  \end{eqnarray}
For every $N\in\myN$, taking a summation  on both sides of
\eqref{eq-proof-prop-1} from $k=0$ to $k=N$, we have
  \begin{eqnarray*}
  &\myE[V(x_{N+1})]-\myE[V(x_0)]\leq-\sum\limits_{k=0}^N\myE[|z_k|^2]+\gamma^2\sum\limits_{k=0}^N\myE[|v_k|^2].
  \end{eqnarray*}
Since $V(x_0)=V(0)=0$, and $V(x)\geq0$, we obtain that
  \begin{eqnarray*}
     \sum\limits_{k=0}^N\myE[|z_k|^2]\leq \gamma^2\sum\limits_{k=0}^N\myE[|v_k|^2].
  \end{eqnarray*}
Let $N\rightarrow \infty$, we get
  \begin{eqnarray*}
     \sum\limits_{k=0}^\infty\myE[|z_k|^2]\leq \gamma^2\sum\limits_{k=0}^\infty\myE[|v_k|^2].
  \end{eqnarray*}
  This proves that \eqref{eq-system-1} is externally stable and $\|\myL\|\leq\gamma$.

  Now, we will prove that system \eqref{eq-system-1} is also internally stable. Since
  \begin{eqnarray*}
   V(x)&=&V(\alpha\beta x+(1-\alpha)0)\leq \alpha V(\beta x)+(1-\alpha)V(0)\\
   &=&\alpha V(\beta x),
  \end{eqnarray*}
 this implies
  $$
  V(\beta x)\geq \beta V(x).
  $$
  By \eqref{eq-H-1-ineq-1} and above inequality, we obtain
  $$
   \myE[V(f(x,\omega_0))]- V(x)+|m(x)|^2\leq 0,
  $$
  i.e.
  $$
  \myH_0(V(x))\leq0.
  $$
  By Proposition \ref{prop-stability}, we proves that system \eqref{eq-system-1} is internally stable.
\end{proof}
\begin{remark}
  Denote $\myfunV$ a set of all positive convex functions defined on $\myR^n$ satisfying \eqref{eq-cond-Delta-V-01} and
\begin{eqnarray*}
    \myBeta=\left\{\begin{array}{r}(\beta,V):\beta>1\mbox{ and }V\in\myfunV  \mbox{ satisfy \eqref{eq-H-1-ineq-1}}
    \end{array}\right\}.
\end{eqnarray*}

  Define
  \begin{eqnarray}\label{eq-def-gamm-star}
      \gamma^{*^2}=\inf\limits_{(\beta,V)\in\myBeta}\sup\limits_{x\in\myR^n}G_\beta(V(x)).
  \end{eqnarray}
  From the proof of Proposition \ref{prop-stable-external}, we can see that $\|\myL\|\leq\gamma^*$. This can be used to estimate the upper bound of operator norm $\|\myL\|$, though $\gamma^*$ given by \eqref{eq-def-gamm-star} is not necessarily the best one. But it is the locally best one, this is because that $V$ is confined to $\myfunV$  which is a subset of convex functions.
\end{remark}

%


In order to induce the bounded real lemma for system
\eqref{eq-system-1}, we introduce the definition of convexity of
vector-valued function as following.
\begin{definition}
  Let $f_0:\myR^n\rightarrow\myR^{n_0}$ and $h:\myR^{n_0}\rightarrow\myR$. The vector-valued function $f_0$ is said convex with respect to $h$, or is called $h-$convex if the compound function $h\circ f_0:\myR^n\rightarrow \myR$ is convex, i.e., for every $0<\alpha<1$ and $x,y\in\myR^n$, there exists
  \begin{equation}
    h(f_0(\alpha x+(1-\alpha)y))\leq \alpha h(f_0(x))+(1-\alpha) h(f_0(y)).
  \end{equation}
\end{definition}

\begin{remark}
 The definition of $h-$convexity can be seen as an extension of logarithmic convexity used in \cite{Montel1928}.
\end{remark}

In this paper, the following assumption is needed and will be used
in the latter discussion.

{\bf ($A_1$):} For every $w\in\myR^d$, $m(\cdot):\myR^n\rightarrow
\myR^{n_m}$ and  $m\circ f(\cdot,w):\myR^n\rightarrow\myR^{n_m}$ are
$h-$convex, where $h:\myR^{n_m}\rightarrow\myR$ is defined by
$h(y)=|y|^2,y\in\myR^{n_m}$.

\begin{lemma}\label{lem-Convex-V}
Suppose Assumption ($A_1$) holds and system \eqref{eq-system-1} is
internally stable. Then $V:\myR^n\rightarrow \myR^+$ defined by
\eqref{eq-define-V} is a convex function.
\end{lemma}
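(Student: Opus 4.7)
The plan is to express $V$ as a pointwise monotone limit of partial sums
\begin{equation*}
V_N(x) := \sum_{i=0}^N \myE|m(x_i^{0,x,0})|^2,
\end{equation*}
show that each $V_N$ is convex, and then conclude since a pointwise limit of convex functions is convex. Finiteness and the convergence $V_N\uparrow V$ are furnished by internal stability and the computation in Lemma \ref{lem-stability-lemma-inverse}.

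A convenient recursion follows from the Markov-type identity $x_i^{0,x,0}=x_i^{1,f(x,\omega_0),0}$ used in the proof of Lemma \ref{lem-stability-lemma-inverse}, together with the i.i.d.\ property of $\{\omega_k\}$ (so that $\myE|m(x_i^{1,y,0})|^2=\myE|m(x_{i-1}^{0,y,0})|^2$), namely
\begin{equation*}
V_N(x)=|m(x)|^2+\myE\big[V_{N-1}(f(x,\omega_0))\big],\qquad V_0(x)=|m(x)|^2.
\end{equation*}
The base case $V_0(x)=|m(x)|^2=h(m(x))$ is convex by the $h$-convexity of $m$ (first clause of $(A_1)$). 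The next term $\myE|m(f(x,\omega_0))|^2$ is, for each realization $\omega_0=w$, equal to $h(m\circ f(\cdot,w))(x)$, which is convex by the second clause of $(A_1)$; taking expectation preserves convexity.

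To handle general $N$ I would induct, but a naive induction on convexity of $V_N$ alone does not close, because convexity of $\myE[V_N(f(x,\omega_0))]$ in $x$ requires convexity of $V_N\circ f(\cdot,w)$ for every $w$, not merely of $V_N$. My plan is therefore to strengthen the induction hypothesis so that at each step both $V_N$ is convex and $V_N\circ f(\cdot,w)$ is convex for every $w\in\myR^d$. The base case uses both clauses of $(A_1)$, as above, and the step for the first clause is immediate from the recursion. The main obstacle, and the delicate step, is to propagate the second clause: establishing convexity of $V_{N+1}\circ f(\cdot,w)$ amounts to controlling an iterated precomposition of $f$ in its first argument, and $h$-convexity of $m\circ f(\cdot,w)$ does not obviously survive a further precomposition by $f$ without additional structure. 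To overcome this I would exploit the recursion
\begin{equation*}
V_{N+1}(f(x,w))=|m(f(x,w))|^2+\myE\big[V_N(f(f(x,w),\omega_0))\big],
\end{equation*}
handling the first summand directly via the $h$-convexity of $m\circ f(\cdot,w)$ in $(A_1)$, and transporting convexity through the second summand one layer at a time using the specific $h$-convex (rather than merely convex) structure supplied by $(A_1)$ on the outermost $m\circ f(\cdot,\cdot)$ layer. Once the strengthened induction closes, passing $N\to\infty$ yields the convexity of $V$.
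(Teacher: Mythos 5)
Your decomposition of $V$ as the monotone limit of the partial sums $V_N$, together with the recursion $V_N(x)=|m(x)|^2+\myE[V_{N-1}(f(x,\omega_0))]$, is exactly the skeleton of the paper's argument: the paper proves, by induction on the time index $k$, a pair of termwise inequalities asserting that both $x\mapsto\myE|m(x_k^{0,x,0})|^2$ and $x\mapsto\myE|m(f(x_k^{0,x,0},\omega_k))|^2$ are convex, and then sums over $k$ and lets $N\to\infty$ using internal stability. Your observation that a naive induction on convexity of $V_N$ alone does not close, and that one must also carry convexity of $V_N\circ f(\cdot,w)$ through the induction, is precisely why the paper tracks the second family of inequalities. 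So the route is the same; the difference is that you stop at the decisive step and say so.

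That step is a genuine gap, not a routine verification. Proving convexity of $x\mapsto\myE[V_N(f(x,w))]$ --- equivalently, the inductive step for your second clause --- requires that the iterated compositions $x\mapsto|m(f(f(\cdots f(x,w_0)\cdots),w_{N}))|^2$ be convex for every choice of $w_0,\dots,w_N$. Assumption ($A_1$) supplies $h$-convexity of $m$ and of $m\circ f(\cdot,w)$, i.e.\ one layer of $f$ only; convexity is not preserved under precomposition with a nonlinear map, so $h$-convexity of $m\circ f(\cdot,w)$ gives you nothing about $m\circ f(\cdot,w')\circ f(\cdot,w)$. Your plan to ``transport convexity through the second summand one layer at a time using the $h$-convex structure'' therefore cannot succeed as stated: there is no mechanism in ($A_1$) for peeling off the outer layer, and closing the induction would require an additional structural hypothesis (e.g.\ $f(\cdot,w)$ affine, or an assumption that all iterated compositions are $h$-convex). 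It is worth noting that the paper's own proof meets the identical obstruction --- it derives the first inequality at step $i+1$ from the second at step $i$, and then disposes of the second at step $i+1$ with ``Similarly, we can prove\dots'', which conceals exactly the step you could not complete. You have correctly located the weak point of the argument, but you have not filled it.
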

\begin{proof}
 Let $x_i^{0,x,0}$ the solution of system \eqref{eq-system-1} starting at $k=0$ with initial state $x\in\myR^n$ for $v(\cdot)=0$. Since, for every $0<\alpha<1$ and $x,y\in\myR^n$
  \begin{eqnarray*}
    x^{0,\alpha x+(1-\alpha)y,0}_0=\alpha x+(1-\alpha)y,
  \end{eqnarray*}
  applying the $h-$convexity of $m(\cdot)$ and $m\circ f$, we have
 \begin{eqnarray*}
   |m(x^{0,\alpha x+(1-\alpha)y,0}_0)|^2&=&|m(\alpha x+(1-\alpha)y)|^2
 \leq \alpha |m(x)|^2+(1-\alpha)|m(y)|^2\\
   &=&\alpha |m(x^{0,x,0}_0)|^2+(1-\alpha)|m(x^{0,y,0}_0)|^2,
 \end{eqnarray*}
 and
 \begin{eqnarray*}
   |m(f(x^{0,\alpha x+(1-\alpha)y,0}_0))|^2\leq \alpha |m(f(x^{0,x,0}_0))|^2+(1-\alpha)|m(f(x^{0,y,0}_0))|^2.
 \end{eqnarray*}

 Now we use the inductive method to prove that, for all $k\in\myN$, the following two inequalities are true:
  \begin{eqnarray}\label{eq-proof-convex-1}
   \nonumber\myE[|m(x^{0,\alpha x+(1-\alpha)y,0}_k)|^2]\leq\alpha \myE[|m(x^{0,x,0}_k)|^2]+(1-\alpha)\myE[|m(x^{0,y,0}_k)|^2]
 \end{eqnarray}
 and
 \begin{eqnarray}\label{eq-proof-convex-2}
   \myE[|m(f(x^{0,\alpha x+(1-\alpha)y,0}_k))|^2]\leq \alpha \myE[|m(f(x^{0,x,0}_k))|^2]
   +(1-\alpha)\myE[|m(f(x^{0,y,0}_k))|^2].
 \end{eqnarray}

 Firstly, for $k=0$, by the just above discussions, we see that \eqref{eq-proof-convex-1} and \eqref{eq-proof-convex-2} are true.

 Suppose, for $k\leq i$, the inequalities of  \eqref{eq-proof-convex-1} and \eqref{eq-proof-convex-2} are true. Then,
 for $k=i+1$, keeping $m\circ f$ is $h-$convex in mind, we have
 \begin{eqnarray*}
   \myE[|m(x^{0,\alpha x+(1-\alpha)y,0}_{i+1})|^2]&=&\myE[|m(f(x^{0,\alpha x+(1-\alpha)y,0}_{i},\omega_i))|^2]\\
   &\leq& \alpha\myE[|m(f(x^{0,x,0}_i,\omega_i))|^2]+(1-\alpha)\myE[|m(f(x^{0,y,0}_i,\omega_i))|^2]\\
   &\leq &\alpha\myE[|m(x^{0,x,0}_{i+1})|^2]+(1-\alpha)\myE[|m(x^{0,y,0}_{i+1})|^2].
 \end{eqnarray*}
Similarly, we can prove that \eqref{eq-proof-convex-2} is true for
$k=i+1$. By induction, we prove that \eqref{eq-proof-convex-1} and
\eqref{eq-proof-convex-2} are true.

 For every $N\in\myN$, taking summation on both sides of \eqref{eq-proof-convex-1} for $k$ from $0$ to $N$, we obtain
  \begin{eqnarray*}
   \sum\limits_{k=0}^N \myE[|m(x^{0,\alpha x+(1-\alpha)y,0}_k)|^2]\leq\alpha \sum\limits_{k=0}^N\myE[|m(x^{0,x,0}_k)|^2]+(1-\alpha)\sum\limits_{k=0}^N\myE[|m(x^{0,y,0}_k)|^2].
 \end{eqnarray*}
 Since system \eqref{eq-system-1} is internally stable, together with definition of $V(x)$ by \eqref{eq-define-V}, when let $N\rightarrow\infty$, we get
 \begin{eqnarray*}
    V(\alpha x+(1-\alpha)y)\leq \alpha V(x)+(1-\alpha)V(y),
 \end{eqnarray*}
which shows that $V(x)$ is convex. This ends the proof.
\end{proof}

We now will show that under some proper conditions, an internally
stable system \eqref{eq-system-1} is also externally stable. In the
rest of this section, the following assumptions are needed.

 {\bf ($A_2$):} $m_1(x)$ and $\myE[g(x,\omega_k)^Tg(x,\omega_k)]$ are bounded.

 {\bf ($A_3$):} For internally stable system \eqref{eq-system-1}, there exist two continuous positive functions $C_1,C_2:\myR^+\rightarrow\myR^+$ with $C_1(1)<1$ such that
\begin{eqnarray}\label{eq-assumption-V}
 C_2(\beta) \bar{V}(x)\leq \myE[\bar{V}(\beta f(x,\omega_k))]\leq C_1(\beta) \bar{V}(x),
\end{eqnarray}
where $\bar{V}:\myR^n\rightarrow\myR^+$ is defined by Lemma
\ref{lem-stability-lemma}.

\begin{lemma}\label{lem-internally-stable-externally-stable}
 Under Assumptions  ($A_1$), ($A_2$) and ($A_3$), suppose system \eqref{eq-system-1} is internally stable, then \eqref{eq-system-1} is
 externally stable. Moreover, there exist $\gamma>0$ and a positive function $V:\myR^n\rightarrow \myR^+$ such that \eqref{eq-H-1-ineq-1} and \eqref{eq-H-1-ineq-2} hold.
\end{lemma}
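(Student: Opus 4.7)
The plan is to apply Proposition \ref{prop-stable-external} to a suitably scaled copy of the canonical Lyapunov function produced by Lemma \ref{lem-stability-lemma-inverse}. From internal stability, that lemma together with its subsequent corollary furnishes the positive function
$$
\bar V(x) = \sum_{i=0}^\infty \myE|m(x_i^{0,x,0})|^2,
$$
which enjoys the properties $\Delta_0\bar V(x) = -|m(x)|^2$, $\bar V(x)\leq c_2|x|^2$, $\bar V(x)\geq|m(x)|^2$, and $\bar V(0)=0$. Assumption $(A_1)$ combined with Lemma \ref{lem-Convex-V} will then force $\bar V$ to be convex on $\myR^n$; this is the base Lyapunov function I will work with.

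Next, because $C_1(\cdot)$ is continuous with $C_1(1)<1$ by $(A_3)$, I will fix $\beta>1$ close enough to $1$ so that $C_1(\beta)<\beta$, set $c:=\beta/(\beta-C_1(\beta))$, and take $V:=c\bar V$. To check \eqref{eq-H-1-ineq-1} I would dominate $\myE[\bar V(\beta f(x,\omega_0))]$ by $C_1(\beta)\bar V(x)$ via $(A_3)$ and then use $\bar V(x)\geq|m(x)|^2$:
\begin{eqnarray*}
\myH_1(V(x),\beta)
& = & \frac{c}{\beta}\myE[\bar V(\beta f(x,\omega_0))] - c\bar V(x) + |m(x)|^2\\
& \leq & -\frac{c(\beta-C_1(\beta))}{\beta}\bar V(x) + |m(x)|^2\\
& \leq & \left(1 - \frac{c(\beta-C_1(\beta))}{\beta}\right)|m(x)|^2 \;=\; 0,
\end{eqnarray*}
the last equality being forced by the choice of $c$. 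For \eqref{eq-H-1-ineq-2}, the bound $V(y)\leq cc_2|y|^2$ combined with $(A_2)$ yields, with $M_g$ and $M_1$ denoting uniform bounds for $\bar\sigma(\myE[g(x,\omega_0)^Tg(x,\omega_0)])$ and $\|m_1(x)\|^2$ respectively,
$$
G_\beta(V(x)) \;\leq\; \frac{cc_2\beta M_g}{\beta-1} + M_1 \;=:\; \gamma^2
$$
uniformly in $x$. Together with $V(0)=0$, Proposition \ref{prop-stable-external} will then deliver external stability with $\|\myL\|\leq\gamma$, which is exactly the conclusion of the lemma.

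The most delicate step will be the scaling used to verify \eqref{eq-H-1-ineq-1}. Convexity of $\bar V$ with $\bar V(0)=0$ only yields $\bar V(\beta y)\geq\beta\bar V(y)$, which is the wrong direction, so the upper bound on $\myE[\bar V(\beta f)]$ must be imported from $(A_3)$. The strict inequality $C_1(1)<1$ is exactly what keeps $\beta-C_1(\beta)$ strictly positive for $\beta$ close to $1$, hence what allows a finite $c$ to absorb the residual $|m(x)|^2$ term. If that gap vanished, the scaling argument would collapse, so this hypothesis is really the heart of the proof.
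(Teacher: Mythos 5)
Your proposal is correct and follows the same overall strategy as the paper: take the canonical Lyapunov function $\bar V$ of Lemma \ref{lem-stability-lemma-inverse} (convex by ($A_1$) and Lemma \ref{lem-Convex-V}), rescale it by a constant, pick $\beta>1$ near $1$ using the continuity of $C_1$ and $C_1(1)<1$, and feed the result into Proposition \ref{prop-stable-external}. The one place where you genuinely diverge is the verification of \eqref{eq-H-1-ineq-1}. The paper uses \emph{both} bounds in \eqref{eq-assumption-V}: it defines $q_0=\frac{C_1(\beta_0)(1-C_2(1))}{\beta_0-C_1(\beta_0)}$ and $p_0=\frac{q_0\beta_0}{C_1(\beta_0)}$ so that $(C_2(1)-1)\bar V=q_0\bar V-p_0\bar V$, sandwiches $\myE[\bar V(f)]-\bar V$ between these quantities, and then invokes the exact identity $\Delta_0\bar V=-|m|^2$. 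You instead use only the upper bound $\myE[\bar V(\beta f)]\leq C_1(\beta)\bar V$ together with the pointwise inequality $\bar V\geq|m|^2$ from the corollary following Lemma \ref{lem-stability-lemma-inverse}, and choose $c=\beta/(\beta-C_1(\beta))$ so that the residual $|m(x)|^2$ is exactly absorbed. Your computation checks out ($c(\beta-C_1(\beta))/\beta=1$), and it is arguably leaner: the lower bound $C_2$ in ($A_3$) plays no role in your argument, whereas the paper needs $C_2(1)<1$ (which it gets for free from $C_2(1)\leq C_1(1)<1$) to make $q_0>0$. The bound for \eqref{eq-H-1-ineq-2} is handled identically in both proofs, and you correctly flag that convexity alone gives $\bar V(\beta y)\geq\beta\bar V(y)$, the wrong direction, so that ($A_3$) is indispensable.
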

\begin{proof}
Since system \eqref{eq-system-1} is internally stable, by Lemma
\ref{prop-stability}, take $\bar{V}$ defined by \eqref{eq-define-V},
then
  \begin{eqnarray*}
    \Delta_0\bar{V}(x)\leq -|m(x)|^2,
  \end{eqnarray*}
  i.e.,
  \begin{eqnarray}\label{eq-proof-necessary-V-tilde}
    \myE[\bar{V}(f(x,\omega_k))]-\bar{V}(x)+|m(x)|^2\leq 0.
  \end{eqnarray}
By Assumption ($A_3$), $C_1(\cdot)$ is continuous and $C_1(1)<1$, so
 $$
 \lim\limits_{\beta\rightarrow+1}(\beta-C_1(\beta))=1-C_1(1)>0.
 $$
 This implies that there exits $\beta_0>1$ such that
 \begin{eqnarray}\label{eq-define-beta-0}
   \beta_0-C_1(\beta_0)>0.
 \end{eqnarray}
 Taking
 \begin{eqnarray}
 \label{eq-define-p-0}
 q_0=\frac{C_1(\beta_0)(1-C_2(1))}{\beta_0-C_1(\beta_0)}, \ \
  p_0=\frac{q_0\beta_0}{C_1(\beta_0)},
 \end{eqnarray}
 it is easy to check $p_0>q_0>0$. Let
 $$
 V(x)=p_0\bar{V}(x).
 $$
Applying \eqref{eq-assumption-V} in Assumption ($A_3$), we have
  \begin{eqnarray*}
   \myE[\bar{V}(f(x,\omega_k))]-\bar{V}(x)&\geq& (C_2(1)-1)\bar{V}(x)=q_0\bar{V}(x)-p_0\bar{V}(x)\\
   &\geq& \frac{q_0}{C_1(\beta_0)}\myE[\bar{V}(\beta_0 f(x,\omega_k))]-p_0\bar{V}(x)\\
   &=&\frac{1}{\beta_0} \frac{q_0\beta_0}{C_1(\beta_0)}\myE[\bar{V}(\beta_0 f(x,\omega_k))]-p_0\bar{V}(x)\\
   &=&\frac{1}{\beta_0}\myE[ p_0\bar{V}(\beta_0 f(x,\omega_k))]-p_0\bar{V}(x)\\
   &=&\frac{1}{\beta_0}\myE[V(\beta_0 f(x,\omega_k))]-V(x).
  \end{eqnarray*}
Keeping inequality \eqref{eq-proof-necessary-V-tilde} in mind, we
obtain
$$
\frac{1}{\beta_0}\myE[V(\beta_0 f(x,\omega_k))]-V(x)+|m(x)|^2\leq0.
$$
This proves that $V(x)=p_0\bar{V}(x)$ satisfies
\eqref{eq-H-1-ineq-1}.

Now, we prove that $V(x)$ also satisfies \eqref{eq-H-1-ineq-2}. By
Assumption ($A_2$), we have
\begin{eqnarray*}
 && \sigma_{\max,g}=
 \sup\limits_{x\in\myR^n}\bar{\sigma}\big(\myE\big[g(x,\omega_k)^Tg(x,\omega_k)\big]\big)
 <\infty,\\
 && \sigma_{\max,m_1}=\sup\limits_{x\in\myR^n}\bar{\sigma}\big(m_1(x)^Tm_1(x)\big)<\infty.
\end{eqnarray*}
Since $\bar{V}$ satisfies inequality  \eqref{eq-cond-Delta-V-01}, we
have
\begin{eqnarray*}
 G_{\beta_0}(V(x))\leq
 \sup\limits_{0\neq v\in\myR^{n_v}}\left\{\frac{c_2\beta_0}{\beta_0-1}\frac{\myE[|g(x,\omega_k)v|^2]}{|v|^2}+\frac{|m_1(x)v|^2}{|v|^2}\right\}\leq\gamma_0,
\end{eqnarray*}
where
\begin{eqnarray}\label{eq-define-gamma-0}
\gamma_0=\frac{c_2\beta_0^2\sigma_{\max,g}}{\beta_0-1}+\sigma_{\max,m_1}.
\end{eqnarray}
Taking $\gamma\geq \gamma_0$, we show that $V$ defined by
\eqref{eq-proof-necessary-V-tilde} also satisfies
\eqref{eq-H-1-ineq-2} for $\gamma\geq\gamma_0$. By
Proposition~\ref{prop-stable-external}, we prove that system
\eqref{eq-system-1} is externally stable and there exist $\gamma>0$
and $V(x)$ satisfying \eqref{eq-H-1-ineq-1} and
\eqref{eq-H-1-ineq-2}.
\end{proof}
\begin{remark}
  The assumption ($A_3$) plays an important role in the proof of Lemma \ref{lem-internally-stable-externally-stable}. Now we give an example to show that
  conditions of inequality \eqref{eq-assumption-V} given in $A_3$ is viable. Considering the linear case of system \eqref{eq-system-1}
  with $f(x,\omega_k)=A_kx$, $g(x,\omega_k)\equiv B$, $m(x)=Mx$, $m_1(x)\equiv M_1$, where $\{A_k\}_{k\in\myN}$ is a
  independent identically distributed random matrix sequence with  $\bar{\sigma}\big(\myE[A_k^TA_k]\big)<1$. Suppose $V(x)$ has the form of $V(x)=x^TPx$,
  $P\in\myS_+^n(\myR)$, it's easy to check that $V$ satisfies \eqref{eq-assumption-V} with
  $C_1(\beta)=\bar{\sigma}(Q)\beta^2$, $C_2(\beta)=\underline{\sigma}(Q)\beta^2$ with $C_1(1)<1$.
\end{remark}
In order to show the converse of Proposition
\ref{prop-stable-external}, we first prove the following lemma.
\begin{lemma}\label{lem-internally-externally-stable-converse}
 Suppose ($A_1$), ($A_2$) and ($A_3$) hold. If system \eqref{eq-system-1} is internally stable and $\|\myL\|\leq \gamma$, then there exists a  positive
 convex function $V(x)$ satisfying \eqref{eq-cond-Delta-V-1} and
  \begin{equation}\label{eq-cond-G-0}
    G^0(V)\leq \gamma^2,
  \end{equation}
  where
  $$
  G^0(V)=\sup\limits_{0\neq v\in\myR^{n_v}}\frac{\myE[V(g(0,\omega_k)v)]+|m_1(0)v|^2}{|v|^2}.
  $$
\end{lemma}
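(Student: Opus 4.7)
\medskip
\noindent\textbf{Proof plan.}
The plan is to take the same $V$ that was already manufactured in Lemma~\ref{lem-stability-lemma-inverse}, namely
$$
V(x)=\sum_{i=0}^{\infty}\myE\bigl[|m(x_i^{0,x,0})|^2\bigr],
$$
and to show that $G^0(V)\le\gamma^2$ follows from the operator bound $\|\myL\|\le\gamma$ by feeding the system a one-shot disturbance. By Lemma~\ref{lem-stability-lemma-inverse} this $V$ already satisfies \eqref{eq-cond-Delta-V-1} and \eqref{eq-cond-Delta-V-01} (with $c_1=1$), and by Lemma~\ref{lem-Convex-V}, Assumption~($A_1$) makes $V$ convex. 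So the positivity/convexity/Lyapunov part is free; the only genuinely new content is the inequality \eqref{eq-cond-G-0}.

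To produce \eqref{eq-cond-G-0}, I would fix an arbitrary $v\in\myR^{n_v}$ and consider the input sequence $v_0=v$, $v_k=0$ for $k\ge 1$, with $x_0=0$. Since $f(0,\cdot)\equiv 0$, we get $x_1=g(0,\omega_0)v$, and from then on $x_{k+1}=f(x_k,\omega_k)$ with no disturbance. Writing out the output energy,
$$
\sum_{k=0}^{\infty}\myE|z_k|^2
= |m_1(0)v|^2 \;+\; \sum_{k=1}^{\infty}\myE\bigl[|m(x_k)|^2\bigr],
$$
because $z_0=[\,m(0)^T,\,(m_1(0)v)^T\,]^T=[0,\,(m_1(0)v)^T]^T$ and $z_k=[m(x_k)^T,0]^T$ for $k\ge 1$. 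Conditioning on $\myF_1$ and using Lemma~\ref{lem-property-conditional-expectation} together with the i.i.d.\ property of $\{\omega_k\}$ (exactly as in the time-shift argument inside the proof of Lemma~\ref{lem-stability-lemma-inverse}), the tail sum equals
$$
\sum_{k=1}^{\infty}\myE\bigl[|m(x_k)|^2\bigr]
=\myE\!\left[\sum_{j=0}^{\infty}\myE|m(x_j^{0,y,0})|^2\Big|_{y=x_1}\right]
=\myE\bigl[V(g(0,\omega_0)v)\bigr].
$$
Finiteness of the right-hand side is guaranteed by combining \eqref{eq-cond-Delta-V-01} with Assumption~($A_2$), which gives $\myE[V(g(0,\omega_0)v)]\le c_2\,\sigma_{\max,g}|v|^2<\infty$, so the algebra is legitimate.

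Invoking the hypothesis $\|\myL\|\le\gamma$ for this particular disturbance, whose $\myLiv$-norm is just $|v|$, gives
$$
|m_1(0)v|^2+\myE\bigl[V(g(0,\omega_0)v)\bigr]\;\le\;\gamma^2|v|^2.
$$
Dividing by $|v|^2$ and taking the supremum over $v\neq 0$ yields $G^0(V)\le\gamma^2$, which is exactly \eqref{eq-cond-G-0}. Thus the $V$ constructed from internal stability, already convex by~($A_1$), also satisfies the $G^0$-bound, and the lemma follows.

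The step I expect to be the main obstacle is the conditional-expectation identity that rewrites $\sum_{k\ge 1}\myE|m(x_k)|^2$ as $\myE[V(g(0,\omega_0)v)]$; one has to be careful that when we substitute the random initial condition $x_1=g(0,\omega_0)v$ into the deterministic function $V(\cdot)$ obtained from Lemma~\ref{lem-stability-lemma-inverse}, the time-homogeneity argument ($V_k\equiv V$) carries over correctly under conditioning on $\myF_1$. Everything else reduces to an application of internal stability, Assumptions~($A_1$)--($A_2$), and the definition of $\|\myL\|$.
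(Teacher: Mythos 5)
Your proposal is correct and follows essentially the same route as the paper: both inject a one-shot impulse disturbance into the zero-initial-state system, identify the resulting total output energy with $|m_1(0)v|^2+\myE[V(g(0,\omega_0)v)]$ via the stored-energy function $V$ from Lemma~\ref{lem-stability-lemma-inverse} (the paper places the impulse at a general time $i$ and telescopes with $\myH_0$, you place it at $k=0$ and compute the energy directly, which is the same computation), and then invoke $\|\myL\|\le\gamma$ and divide by $|v|^2$. Your explicit appeal to Lemma~\ref{lem-Convex-V} for convexity and your care with the $\myF_1$-conditioning step are, if anything, slightly more complete than the paper's own write-up.
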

\begin{proof}
  Since system \eqref{eq-system-1} is internally stable, by Lemma \ref{lem-stability-lemma-inverse}, there exists $V(x)$ satisfying  \eqref{eq-cond-Delta-V-1}.
  In order to prove \eqref{eq-cond-G-0}, for every given nonzero $u\in\myR^{n_v}$, we define the following process
  $$
  v_k=\left\{\begin{array}{l}
  u, \quad\mbox{\rm if} \quad k=i,\\
  0,\quad \mbox{\rm if} \quad k\neq i.
  \end{array}\right.
  $$
  $\{x_k\}$ is the solution of \eqref{eq-system-1} corresponding to $\{v_k\}$ defined by above. Then
  \begin{eqnarray*}
    \myE[V(x_{k+1})|\myF_k]-V(x_k)&=&\myH_0(V(x_k))+\myE[V(f(x_k,\omega_k)+g(x_k,\omega_k)v_k)\\
    &&\quad-V(f(x_k,\omega_k))|\myF_k]+|m_1(x_k)|^2|v_k|^2-\gamma^2|v_k|^2\\
    &&\quad-|z_k|^2+\gamma^2|v_k|^2
  \end{eqnarray*}
  Since $x_k=0,k=0,\cdots,i$ and $v_k=0,k=0,\cdots,i-1$, taking the mathematical expectation
   and  summation from $k=0$ to $k=i$ in turn, it yields that
 \begin{eqnarray*}
    \myE V(x_{i+1})&=&\sum\limits_{k=0}^i \myE[\myH_0(V(x_k))]+\myE[V(f(0,\omega_i)+g(0,\omega_i)u)\\
    &&-V(f(0,\omega_i))]+|m_1(0)u|^2-\gamma^2|u|^2- \myE |z_i|^2+\gamma^2|u|^2.
  \end{eqnarray*}
 By (\ref{eq-cond-H-0-2}) of Proposition~\ref{prop-stability},  we must have
  \begin{eqnarray*}
    \myE[V(g(0,\omega_i)u)]+|m_1(0)u|^2-\gamma^2|u|^2&=&|z_i|^2-\gamma^2|u|^2+ \myE V[g(0,\omega_i)u]\\
    &\leq& \myE
    V[g(0,\omega_i)u],
  \end{eqnarray*}
  i.e.,
  \begin{eqnarray*}
   \frac{\myE[V(g(0,\omega_i)u)]+|m_1(0)u|^2}{|u|^2}\leq\gamma^2
  \end{eqnarray*}
  for all $0\neq u\in\myR^{n_v}$, this proves \eqref{eq-cond-G-0}.
\end{proof}

Generally speaking, it is  not easy to  prove the inverse of Lemma
\ref{lem-internally-stable-externally-stable} and to obtain the
bounded real lemma for the general stochastic nonlinear system
\eqref{eq-system-1}. In order to derive the inverse of Lemma
\ref{lem-internally-stable-externally-stable} and to obtain the
bounded real lemma for system \eqref{eq-system-1}, the following
assumption is needed:

{\bf ($A_2'$):} $g(x,\omega_k)\equiv B\in \myR^{n\times n_v}$,
$m_1(x)\equiv M_1\in\myR^{(n_z-n_m)\times n_v}$. For internally
stable system \eqref{eq-system-1} and $\gamma>0$, the following
holds:
\begin{eqnarray}\label{eq-cond-G-0-beta}
 G^0(\frac{\beta_0p_0}{\beta_0-1}\bar{V})\leq\gamma^2,
\end{eqnarray}
where $\bar{V}$ is defined by \eqref{eq-define-V}, $\beta_0>1$
satisfies  \eqref{eq-define-beta-0} and $p_0>0$ is defined by
\eqref{eq-define-p-0}.

\begin{lemma}\label{lem-external-inverse}
 Suppose ($A_1$), ($A'_2$) and ($A_3$) hold. If system \eqref{eq-system-1} is internally stable and $\|\myL\|\leq \gamma$, then there exists a positive convex
 function $V(x)$ satisfying \eqref{eq-H-1-ineq-1} and \eqref{eq-H-1-ineq-2}.
\end{lemma}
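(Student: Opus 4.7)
The plan is to reuse the construction from the proof of Lemma \ref{lem-internally-stable-externally-stable} and to exploit the extra structure in $(A'_2)$ to verify the disturbance condition \eqref{eq-H-1-ineq-2}. The overall strategy is: internal stability $\Longrightarrow$ existence of $\bar V$ (Lemma \ref{lem-stability-lemma-inverse}); $(A_1)$ $\Longrightarrow$ convexity of $\bar V$ (Lemma \ref{lem-Convex-V}); $(A_3)$ $\Longrightarrow$ a scalar multiple $V=p_0\bar V$ satisfies \eqref{eq-H-1-ineq-1} with some $\beta_0$; finally $(A'_2)$ $\Longrightarrow$ this same $V$ satisfies \eqref{eq-H-1-ineq-2} with the prescribed $\gamma$.

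First, since \eqref{eq-system-1} is internally stable, Lemma \ref{lem-stability-lemma-inverse} yields the explicit function $\bar V$ defined by \eqref{eq-define-V}, which satisfies $\Delta_0\bar V(x) = -|m(x)|^2$ and the quadratic growth bound \eqref{eq-cond-Delta-V-01}. Assumption $(A_1)$ together with Lemma \ref{lem-Convex-V} ensures $\bar V$ is convex, and the definition immediately gives $\bar V(0)=0$.

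Second, I would follow the argument of Lemma \ref{lem-internally-stable-externally-stable} verbatim: using $(A_3)$ pick $\beta_0>1$ satisfying \eqref{eq-define-beta-0}, then define $q_0$ and $p_0$ via \eqref{eq-define-p-0}, and set $V(x):=p_0\bar V(x)$. The same chain of inequalities in that proof shows that $V$ satisfies \eqref{eq-H-1-ineq-1} with $\beta=\beta_0$. Being a positive multiple of a convex function, $V$ remains convex, and $V(0)=0$.

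Third, to verify \eqref{eq-H-1-ineq-2} with this choice, I use $(A'_2)$: because $g(x,\omega_k)\equiv B$ and $m_1(x)\equiv M_1$ do not depend on $x$, the quantity $G_{\beta_0}(V(x))$ is actually independent of $x$. Making the change of variables $w=\frac{\beta_0}{\beta_0-1}v$ in the defining supremum, the outer factor $\frac{\beta_0-1}{\beta_0}$ cancels one power of the Jacobian while the remaining power combines with the scaling $\frac{\beta_0}{\beta_0-1}$ inside $\bar V$ to produce exactly the multiplier $\frac{\beta_0 p_0}{\beta_0-1}$; the $M_1$-term is invariant under this rescaling. Thus
$$
G_{\beta_0}(V) \;=\; \sup_{w\neq 0}\frac{\myE\!\left[\frac{\beta_0 p_0}{\beta_0-1}\bar V(Bw)\right]+|M_1 w|^2}{|w|^2} \;=\; G^{0}\!\left(\tfrac{\beta_0 p_0}{\beta_0-1}\bar V\right),
$$
which is at most $\gamma^2$ by the hypothesis \eqref{eq-cond-G-0-beta} contained in $(A'_2)$. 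This gives \eqref{eq-H-1-ineq-2} and completes the construction.

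The main obstacle, and the reason the hypothesis $(A'_2)$ is tailored the way it is, lies in step three: one must arrange that the same scalar $p_0$ selected in step two to obtain \eqref{eq-H-1-ineq-1} also yields the scaled Lyapunov function $\tfrac{\beta_0 p_0}{\beta_0-1}\bar V$ that \eqref{eq-cond-G-0-beta} is stated for. If $g$ or $m_1$ depended on $x$, the change of variables above could not separate $v$ from the state inside $\bar V(g(x,\omega_k)v)$, and the \emph{global} operator-norm bound $\|\myL\|\leq \gamma$ supplied by Lemma \ref{lem-internally-externally-stable-converse} would not translate into a \emph{pointwise} bound on $G_{\beta_0}(V(x))$. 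It is precisely to bypass this dividing-variables difficulty that $(A'_2)$ is imposed.
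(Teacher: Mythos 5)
Your proposal is correct and follows essentially the same route as the paper: construct $\bar V$ from internal stability, get convexity from $(A_1)$, rescale to $V=p_0\bar V$ with the $\beta_0$, $p_0$ of \eqref{eq-define-beta-0}--\eqref{eq-define-p-0} to obtain \eqref{eq-H-1-ineq-1}, and use the identity $G_{\beta_0}(V)=G^0\bigl(\tfrac{\beta_0 p_0}{\beta_0-1}\bar V\bigr)$ together with \eqref{eq-cond-G-0-beta} to obtain \eqref{eq-H-1-ineq-2}. The only cosmetic difference is that you cite Lemmas \ref{lem-stability-lemma-inverse} and \ref{lem-Convex-V} directly instead of routing through Lemma \ref{lem-internally-externally-stable-converse}, and you verify by change of variables the $G_{\beta}$/$G^0$ identity that the paper states without computation.
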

\begin{proof}
By Lemma \ref{lem-internally-externally-stable-converse}, there
exists a convex function $\bar{V}(x)$ satisfying
\eqref{eq-cond-Delta-V-1} and \eqref{eq-cond-G-0}. Furthermore,
there exists $\beta_0>1$ such that $p_0>0$, where $p_0$ is defined
by \eqref{eq-define-p-0}. Let $V(x)=p_0\bar{V}(x)$. Similar to the
proof of Lemma \ref{lem-internally-stable-externally-stable},  it is
easy to prove that $V(x)$ satisfies the inequality
\eqref{eq-H-1-ineq-1}. Because
$$
G_\beta(V(x))=G^0(\frac{\beta p_0}{\beta-1}\bar{V}),
$$
which, together with Assumption ($A_2'$),  shows the inequality
\eqref{eq-H-1-ineq-2}. The proof is completed.
\end{proof}
\begin{remark}
  Comparing  \eqref{eq-H-1-ineq-2} with  \eqref{eq-cond-G-0}, we can find that \eqref{eq-H-1-ineq-2} holds  for all $x\in\myR^n$, while
   \eqref{eq-cond-G-0} holds only  at  $x=0$.
   This shows that \eqref{eq-H-1-ineq-2} implies \eqref{eq-cond-G-0-beta}, but the inverse is not always  true for general $g$ and $m_1$.
   In Assumption  ($A_2'$), $g$ and $m_1$ does not depend on $x$, which ensure that \eqref{eq-cond-G-0-beta} implies \eqref{eq-H-1-ineq-2}.
\end{remark}
Combining Lemma \ref{lem-internally-stable-externally-stable} and
Lemma \ref{lem-external-inverse}, we are in a position to obtain a
stochastic version of the bounded real lemma as follows:
\begin{theorem}\label{th-bounded-real-lemma}
  (Stochastic bounded real lemma) Under Assumptions ($A_1$), ($A_2'$) and ($A_3$), for any positive real number $\gamma 0$, the following statements are equivalent:

  {\rm(i)} The system \eqref{eq-system-1} is internally stable and $\|\myL\|\leq \gamma$.

  {\rm(ii)} There exists a convex positive function $V:\myR^n\rightarrow\myR^+$ such that \eqref{eq-H-1-ineq-1} and \eqref{eq-H-1-ineq-2} hold.
\end{theorem}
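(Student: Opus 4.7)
The theorem packages the two directions already established separately, so the plan is essentially an assembly of Lemma~\ref{lem-external-inverse} and Proposition~\ref{prop-stable-external}, with a little care taken on the internal-stability half of the reverse direction.

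For the implication (i)$\Rightarrow$(ii), I would directly invoke Lemma~\ref{lem-external-inverse}. Under the assumptions ($A_1$), ($A_2'$), and ($A_3$), internal stability together with $\|\myL\|\leq\gamma$ is shown there to produce a convex positive function $V$ satisfying \eqref{eq-H-1-ineq-1} and \eqref{eq-H-1-ineq-2}. The construction is the concrete one $V=p_0\bar V$, where $\bar V$ is the converse Lyapunov function from \eqref{eq-define-V}, and $p_0>0$, $\beta_0>1$ come from \eqref{eq-define-p-0} and \eqref{eq-define-beta-0}. This gives exactly the function required by (ii), so nothing further is needed.

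For the implication (ii)$\Rightarrow$(i), I would apply Proposition~\ref{prop-stable-external}. The inequality $\|\myL\|\leq\gamma$ (i.e.\ external stability) follows immediately from that proposition without requiring any auxiliary assumption. For internal stability, the proof of Proposition~\ref{prop-stable-external} uses the convexity of $V$ together with $V(0)=0$ to obtain $V(\beta x)\geq\beta V(x)$; combined with \eqref{eq-H-1-ineq-1} this yields $\myH_0(V(x))\leq 0$, and Proposition~\ref{prop-stability} then delivers internal stability, provided the additional quadratic upper bound $V(x)\leq c_2|x|^2$ is available. Under Assumption ($A_3$) this bound is not gratuitous: it is precisely the quadratic majorization \eqref{eq-cond-Delta-V-01} imposed on the admissible class $\myfunV$ in which the bounded real lemma is phrased, so it is part of what the word ``positive convex function'' should be understood to mean here. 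With this, internal stability is secured.

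The main obstacle is thus the \emph{internal} stability half of (ii)$\Rightarrow$(i): one must close the gap between the two inequalities \eqref{eq-H-1-ineq-1}--\eqref{eq-H-1-ineq-2} and the hypotheses of Proposition~\ref{prop-stability}. The convexity plus $V(0)=0$ argument already in Proposition~\ref{prop-stable-external} takes care of deriving $\myH_0(V(x))\leq 0$ from \eqref{eq-H-1-ineq-1}, which is the non-trivial step, while the quadratic upper bound is built into the function class via ($A_3$) and \eqref{eq-cond-Delta-V-01}. Once these points are spelled out, the two-way equivalence is formal.
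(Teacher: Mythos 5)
Your proposal is correct and follows essentially the same route as the paper, which obtains the theorem by combining Lemma~\ref{lem-external-inverse} for (i)$\Rightarrow$(ii) with Proposition~\ref{prop-stable-external} for (ii)$\Rightarrow$(i). If anything, you are more careful than the paper in noting that the internal-stability half of (ii)$\Rightarrow$(i) additionally needs the quadratic bound \eqref{eq-cond-Delta-V-01} on $V$ (so that Proposition~\ref{prop-stability} applies), a point the paper leaves implicit in the class $\myfunV$.
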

Specially, for linear case with following form

\begin{equation}\label{eq-system-linear-1}
\left\{\begin{array}{l}
  x_{k+1}=Ax_k+A_0x_k\omega_k+Bv_k\\
  z_k=\left[\begin{array}{c}
  Cx_k\\
  Dv_k
  \end{array}\right]
\end{array}\right.
\end{equation}
where $A,A_0\in\myR^{n\times n}$, $B\in\myR^{n\times n_v}$,
$C\in\myR^{m\times n}$, $D\in\myR^{(n_z-m)\times n_v}$, and
$\{\omega_k\}_{k\in\myN}$ is an  independent identical distributed
1-dimensional random variable series with $\myE[\omega_k]=0$ and
$\myE[\omega_k^2]=1$, $k\in\myN$. The following assumptions are
needed.

Similar to Proposition \ref{prop-stability}, the following lemma can
be obtained directly.
\begin{lemma}\label{lem-stability-lemma-linear}
System \eqref{eq-system-linear-1} is internally stable if and only
if there exists $P\in\myS_+^n(\myR)$ such that
  \begin{eqnarray}\label{eq-cond-P}
    A^TPA+A_0^TPA_0-P+C^TC\leq0.
  \end{eqnarray}
\end{lemma}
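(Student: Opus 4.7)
The plan is to specialize Proposition~\ref{prop-stability} to the linear multiplicative-noise setting by taking the candidate Lyapunov function to be the quadratic form $V(x)=x^TPx$, and to exploit the moment assumptions $\myE[\omega_k]=0$ and $\myE[\omega_k^2]=1$ together with the independence of $\omega_0$ from $x$ to simplify the operator $\myH_0$. A direct expansion using these two moments gives $\myE[V(Ax+A_0x\omega_0)]=x^T(A^TPA+A_0^TPA_0)x$, so that $\myH_0(V(x))=x^T(A^TPA+A_0^TPA_0-P+C^TC)x$. In this way the matrix inequality \eqref{eq-cond-P} is simply the quadratic-form encoding of the functional inequality \eqref{eq-cond-H-0-2}.

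The sufficiency direction is then immediate. Given $P\in\myS_+^n(\myR)$ satisfying \eqref{eq-cond-P}, the function $V(x)=x^TPx$ obeys $V(x)\le\bar{\sigma}(P)|x|^2$, so \eqref{eq-cond-H-0-1} holds with $c_2=\bar{\sigma}(P)$, and the computation above shows $\myH_0(V(x))\le 0$. Proposition~\ref{prop-stability} then yields internal stability of \eqref{eq-system-linear-1}.

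For the necessity direction I would invoke Lemma~\ref{lem-stability-lemma-inverse} to obtain the canonical Lyapunov function $\bar{V}(x)=\sum_{i=0}^{\infty}\myE|Cx_i^{0,x,0}|^2$, which satisfies $\Delta_0\bar{V}(x)=-|Cx|^2$. In the linear case the flow factors as $x_k^{0,x,0}=\Phi_kx$, where $\Phi_k:=(A+A_0\omega_{k-1})\cdots(A+A_0\omega_0)$ is independent of $x$. Consequently $\bar{V}(x)=x^TPx$ with $P:=\sum_{k=0}^{\infty}\myE[\Phi_k^TC^TC\Phi_k]\ge 0$, and internal stability guarantees convergence of this series together with the bound $\bar{\sigma}(P)\le c$. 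Inserting this quadratic form back into $\Delta_0\bar{V}(x)=-|Cx|^2$ and using the same moment identities as above produces $x^T(A^TPA+A_0^TPA_0-P+C^TC)x=0$ for every $x\in\myR^n$, hence \eqref{eq-cond-P} holds with equality.

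The main technical point that merits care is strict positive definiteness of $P$: the Gramian-type construction above only guarantees $P\ge 0$, whereas the notation $\myS_+^n(\myR)$ in this paper denotes strictly positive definite matrices. Two natural remedies are available: either to read the conclusion in the semidefinite sense (which is what the proof of Proposition~\ref{prop-stability} actually exploits), or to perturb the output by $C_\epsilon=[C^T,\epsilon I]^T$ for small $\epsilon>0$, apply the construction to this strictly observable system to obtain a strictly positive $P_\epsilon$ satisfying the perturbed inequality, and then pass to the limit $\epsilon\to 0^+$. Apart from this point, the remaining manipulations are routine quadratic-form calculations.
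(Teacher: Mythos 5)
Your proposal is correct and follows essentially the route the paper intends: the paper gives no explicit proof of this lemma, simply asserting that it is ``obtained directly'' in the manner of Proposition~\ref{prop-stability}, which is exactly your specialization to $V(x)=x^TPx$, the computation of $\myH_0$ via the moment identities $\myE[\omega_k]=0$, $\myE[\omega_k^2]=1$, and, for necessity, the quadratic Gramian form of the canonical Lyapunov function from Lemma~\ref{lem-stability-lemma-inverse}. Your remark on strict versus semidefinite positivity of $P$ flags a genuine looseness in the paper's own statement; of your two remedies the semidefinite reading is the one the argument actually supports, since the $\epsilon$-perturbation both presupposes mean-square summability of the state (which internal stability, defined only through the output $Cx_k$, does not guarantee) and in any case loses strictness again when you pass to the limit $\epsilon\to0^+$.
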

In order to obtain the bounded real lemma for linear system
\eqref{eq-system-linear-1}, the following assumption is needed,
which corresponds to ($A'_2$) and ($A_3$).

{\bf Assumption ($A''_2$)}: $\bar{\sigma}(A^TA+A_0^TA_0)<1$, and
$\frac{\beta_0^2p_0}{\beta_0-1}B^TPB$ $+D^TD\leq\gamma^2I_{n_v}$,
where
$$
1<\beta_0<\frac{1}{\bar{\sigma}(A^TA+A_0^TA_0)},
$$
$$p_0=\frac{\beta_0[1-\underline{\sigma}(A^TA+A_0^TA_0)]}{\beta_0-\bar{\sigma}(A^TA+A_0^TA_0)},
$$
and $P$ satisfies \eqref{eq-cond-P}.

\begin{remark}
 If system \eqref{eq-system-linear-1} is internally stable, taking $V(x)=x^TPx$, then Assumption ($A_2''$) implies Assumptions  ($A'_2$) and ($A_3$).
 It  is  easy to check that system \eqref{eq-system-linear-1} satisfies the assumptions of Theorem \ref{th-bounded-real-lemma}.
\end{remark}
Corresponding to  Theorem \ref{th-bounded-real-lemma},  the bounded
real lemma for linear system \eqref{eq-system-linear-1} is expressed
via algebraic inequalities.
\begin{theorem}\label{th-BRL-linear-our}
 Under Assumption ($A''_2$), for $\gamma>0$, the following statements are equivalent:

 {\rm(i)} The system \eqref{eq-system-linear-1} is internally stable and $\|\myL\|\leq \gamma$.

  {\rm(ii)} There exists $P\in\myS_+^n(\myR)$, $\beta>1$ such that
\begin{eqnarray}
 \label{eq-ARI-1-our} && \frac{\beta^2}{\beta-1}B^TPB+D^TD\leq\gamma^2I_{n_v},\\
 \label{eq-ARI-2-our} && \beta(A^TPA+A_0^TPA_0)-P+C^TC\leq0.
\end{eqnarray}
\end{theorem}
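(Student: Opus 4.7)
The plan is to specialize Theorem \ref{th-bounded-real-lemma} to the linear setting by restricting the abstract convex Lyapunov function to the quadratic form $V(x) = x^T P x$ with $P \in \myS_+^n(\myR)$, which is automatically convex and positive. The linear structure of \eqref{eq-system-linear-1} makes $(A_1)$ immediate, since the squared norm of a linear map is a convex quadratic, and the remark preceding the theorem reduces $(A_2')$ and $(A_3)$ to the single assumption $(A_2'')$. Consequently all hypotheses of Theorem \ref{th-bounded-real-lemma} are met once a suitable quadratic $V$ is produced, and the task reduces to translating \eqref{eq-H-1-ineq-1} and \eqref{eq-H-1-ineq-2} into matrix inequalities on $P$.

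For the direction (ii) $\Rightarrow$ (i), I would take $V(x) = x^T P x$ and compute each term directly. Using that $\omega_0$ is centered with unit variance,
\[
\myE[V(\beta f(x,\omega_0))] = \beta^2 x^T(A^T P A + A_0^T P A_0) x,
\]
so that $\myH_1(V(x),\beta) = x^T[\beta(A^T P A + A_0^T P A_0) - P + C^T C] x$, which is non-positive by \eqref{eq-ARI-2-our}. Because $g \equiv B$ and $m_1 \equiv D$ are constant, the supremum defining $G_\beta(V(x))$ is simply the largest eigenvalue of a fixed symmetric matrix in $v$, so the bound $G_\beta(V(x)) \leq \gamma^2$ translates exactly into \eqref{eq-ARI-1-our}. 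Theorem \ref{th-bounded-real-lemma} then delivers internal stability and $\|\myL\| \leq \gamma$.

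For the converse (i) $\Rightarrow$ (ii), the key observation is that, in the linear setting, the constructive Lyapunov function from Lemma \ref{lem-stability-lemma-inverse} is automatically quadratic: internal stability together with Lemma \ref{lem-stability-lemma-linear} produces $\bar P \geq 0$ satisfying \eqref{eq-cond-P}, and the series in \eqref{eq-define-V} becomes a convergent matrix series so that $\bar V(x) = x^T \bar P x$ for a suitable $\bar P$. Following the rescaling in the proof of Lemma \ref{lem-internally-stable-externally-stable} with $\beta_0$ and $p_0$ prescribed by $(A_2'')$, and setting $P = p_0 \bar P$, preserves quadraticity; then the abstract bounds \eqref{eq-H-1-ineq-1} and \eqref{eq-H-1-ineq-2} for this $V$, supplied by Lemmas \ref{lem-internally-stable-externally-stable} and \ref{lem-external-inverse}, unravel directly into \eqref{eq-ARI-1-our} and \eqref{eq-ARI-2-our} with $\beta = \beta_0$. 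The main obstacle is bookkeeping: one must verify that each step of the general nonlinear proofs respects the quadratic form, and that the interplay between the rescaling $V = p_0 \bar V$ and the definition of $G_\beta$ reproduces the precise coefficient $\beta^2/(\beta-1)$ appearing in \eqref{eq-ARI-1-our} rather than a weaker variant.
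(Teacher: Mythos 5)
Your overall route is the same as the paper's: both directions specialize the general results to the quadratic function $V(x)=x^TPx$, with (ii) $\Rightarrow$ (i) obtained from Proposition \ref{prop-stable-external} (equivalently the sufficiency half of Theorem \ref{th-bounded-real-lemma}) and (i) $\Rightarrow$ (ii) obtained from Lemma \ref{lem-stability-lemma-linear} together with the rescaling $P=p_0\bar P$ and Assumption ($A_2''$). Your computations of $\myE[V(\beta f(x,\omega_0))]$ and of $\myH_1$ are correct and give exactly \eqref{eq-ARI-2-our}.

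The one point you should not leave as ``bookkeeping'' is the coefficient in \eqref{eq-ARI-1-our}, and your instinct that something might not match is right. For $V(x)=x^TPx$, $g\equiv B$, $m_1\equiv D$ one gets
\begin{equation*}
G_\beta(V(x))=\sup_{0\neq v}\frac{v^T\left(\frac{\beta}{\beta-1}B^TPB+D^TD\right)v}{|v|^2},
\end{equation*}
because the prefactor $\frac{\beta-1}{\beta}$ combines with the $\bigl(\frac{\beta}{\beta-1}\bigr)^2$ coming out of the quadratic to give $\frac{\beta}{\beta-1}$, not $\frac{\beta^2}{\beta-1}$. So the condition $G_\beta(V)\leq\gamma^2$ does \emph{not} ``translate exactly'' into \eqref{eq-ARI-1-our}; it translates into the weaker inequality with $\frac{\beta}{\beta-1}$. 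This is harmless for (ii) $\Rightarrow$ (i), since $\frac{\beta^2}{\beta-1}B^TPB\geq\frac{\beta}{\beta-1}B^TPB$ and hence \eqref{eq-ARI-1-our} still implies $G_\beta(V)\leq\gamma^2$, which is all Proposition \ref{prop-stable-external} needs. But for (i) $\Rightarrow$ (ii) you cannot recover the $\beta^2/(\beta-1)$ coefficient from the abstract bound \eqref{eq-H-1-ineq-2}; the paper does not attempt to. Instead, \eqref{eq-ARI-1-our} is read off directly from the second inequality in Assumption ($A_2''$), which postulates $\frac{\beta_0^2p_0}{\beta_0-1}B^T\bar PB+D^TD\leq\gamma^2I_{n_v}$ for the $\bar P$ solving \eqref{eq-cond-P}; with $P=p_0\bar P$ and $\beta=\beta_0$ this is verbatim \eqref{eq-ARI-1-our}. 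Once you make that substitution explicit, your argument closes and coincides with the paper's proof.
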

\begin{proof}
  Firstly, we prove {\rm(i)} implies {\rm(ii)}. Since the system \eqref{eq-system-linear-1} is internally stable.
  By Lemma \ref{lem-stability-lemma-linear}, there exists $\bar{P}\in\myS_+^n(\myR)$. Taking $P=p_0\bar{P}$, let $\bar{V}(x)=x^T\bar{P}x$ and $V(x)=x^TPx$.
  Applying Assumption ($A''_2$) and Theorem \ref{th-bounded-real-lemma}, we prove that  $P$ satisfies \eqref{eq-ARI-1-our} and \eqref{eq-ARI-2-our}.

  As far as {\rm(ii)} implies {\rm(i)}, it   can be obtained directly by Proposition \ref{prop-stable-external} with $V(x)=x^TPx$, where $P\in\myS_+^n(\myR)$ satisfies \eqref{eq-ARI-1-our} and \eqref{eq-ARI-2-our}.
\end{proof}

\section{ $H_\infty$ control for general discrete-time stochastic systems}
In this section, we  consider the $H_\infty$ control of the
following general discrete-time stochastic system
\begin{eqnarray}\label{eq-system-control-general-F(x,u,v,omega)}
  \left\{\begin{array}{l}
    x_{k+1}=F_k(x_k,u_k,v_k,\omega_k),\\
    z_k=m_k(x_k,u_k,v_k)
  \end{array}\right.
\end{eqnarray}
where
$F_k:\myR^n\times\myR^{n_u}\times\myR^{n_v}\times\myR^d\rightarrow\myR^n$
and
$m_k:\myR^n\times\myR^{n_u}\times\myR^{n_v}\rightarrow\myR^{n_z}$
are measurable functions with
$F_k(0,\cdot,\cdot,\cdot)\equiv0,m_k(0,\cdot,\cdot)=0$,
$u(\cdot):=\{u_k\}_{k\in\myN}$ is the control sequence,
$v(\cdot):=\{v_k\}_{k\in\myN}$ is the exogenous disturbance sequence
with $v(\cdot)\in\myLiv$. Denote $\{x(k;s,x,u,v)\}_{k\in\myN}$ or
$\{x_k^{s,x,u,v}\}_{k\in\myN}$ the solution sequence of
\eqref{eq-system-control-general-F(x,u,v,omega)} with the initial
$x\in\myR^n$ starting at $k=s$ under the control $u(\cdot)$ and the
exogenous disturbance $v(\cdot)$, and the corresponding regulated
output  is denoted by $\{z(k;s,x,u,v)\}_{k\in\myN}$ or
$\{z_k^{s,x,u,v}\}_{k\in\myN}$. For each admissible control
$u(\cdot)$, define the operator $\myL_u$ by
$$
\myL_u[v(\cdot)]=z(\cdot,0,0,u,v), \ \  v(\cdot)\in\myLiv.
$$
The $H_\infty$ norm of $\myL_u$ is defined by
$$
\|\myL_u\|=\sup\limits_{v\in\myLiv}\frac{\|z(\cdot,0,0,u,v)\|_{l^2_\infty}}{\|v\|_{l^2_\infty}}
$$
We expect to find a state-feedback controller $u^*(\cdot)$ such that
the following closed-loop system of
\eqref{eq-system-control-general-F(x,u,v,omega)}
\begin{eqnarray}\label{eq-system-control-general-F(x,u,v,omega)44}
  \left\{\begin{array}{l}
    x_{k+1}=F_k(x_k,u^*_k(x_k),v_k,\omega_k),\\
    z_k=m_k(x_k,u^*_k(x_k),v_k)
  \end{array}\right.
\end{eqnarray}
 is externally
stable. Concretely speaking, for a given $\gamma>0$, find  a
state-feedback control sequence  $\{u^*_k=u^*_k(x_k)\}_{k\in\myN}$
such that $\|\myL_{u^*}\|\leq \gamma$, i.e.,
\begin{eqnarray*}
  \sum\limits_{k=0}^\infty \myE|z(k,0,0,u^*,v)|^2\leq \gamma^2\sum\limits_{k=0}^\infty \myE|v_k|^2, \ \ \forall v(\cdot)\in\myLiv.
\end{eqnarray*}
For a positive definite function sequence $\{V_k\}_{k\in \myN}$,
$V_k:\myR^n\rightarrow \myR^+$, and a positive real number
$\gamma>0$, we denote
\begin{eqnarray*}
  \Delta_{u,v}V_k(x):=\myE[V_{k+1}(F_k(x,u,v,\omega_k))]-V_k(x),\ \ u\in\myR^{n_u},v\in\myR^{n_v},k\in\myN,
\end{eqnarray*}
and
$$
H_k(x,u,v):=\Delta_{u,v}V_k(x)+|m_k(x,u,v)|^2.
$$
\begin{lemma}\label{lem-H-infty-general}
  Suppose, for given $\gamma>0$, there exist function sequences
  $\{\alpha_k\}_{k\in\myN}$  and   $\{V_k\}_{k\in\myN}$:
  $\alpha_k:\myR^n\rightarrow \myR^{n_u}$ and $V_k:\myR^n\rightarrow\myR^+$
  with $V_k(0)=0$, $k\in\myN$,  such that
  \begin{eqnarray}\label{eq-cond-H-control-general}
    H_k(x,\alpha_k(x),v)-\gamma^2|v|^2\leq 0,\quad \forall v\in\myR^{n_v}.
  \end{eqnarray}
  Then $u_k^*=\alpha_k(x_k)$ is the $H_\infty$ control of \eqref{eq-system-control-general-F(x,u,v,omega)}.
\end{lemma}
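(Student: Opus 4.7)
The plan is to mirror the proof of Proposition~\ref{prop-stable-external}, but with the state feedback $u_k^* = \alpha_k(x_k)$ already substituted into the running-cost operator $H_k$ so that the disturbance $v$ is the only remaining free variable. Fix $v(\cdot)\in\myLiv$ and let $\{x_k\}$ denote the solution of the closed-loop system \eqref{eq-system-control-general-F(x,u,v,omega)44} starting from $x_0=0$, with $z_k = m_k(x_k,\alpha_k(x_k),v_k)$. The goal is to derive $\sum_{k=0}^\infty\myE|z_k|^2 \leq \gamma^2\sum_{k=0}^\infty\myE|v_k|^2$, which is precisely $\|\myL_{u^*}\|\leq \gamma$.

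First I would apply the hypothesis \eqref{eq-cond-H-control-general} pointwise: for every fixed $x\in\myR^n$ and every $v\in\myR^{n_v}$,
$$
  \myE\big[V_{k+1}(F_k(x,\alpha_k(x),v,\omega_k))\big] - V_k(x) + |m_k(x,\alpha_k(x),v)|^2 \leq \gamma^2|v|^2.
$$
Substituting $x = x_k$ and $v = v_k$, which are both $\myF_k$-measurable, and invoking Lemma~\ref{lem-property-conditional-expectation} to identify the first term with $\myE[V_{k+1}(x_{k+1})\mid\myF_k]$ almost surely (using that $\omega_k$ is independent of $\myF_k$), yields
$$
  \myE[V_{k+1}(x_{k+1})\mid\myF_k] - V_k(x_k) + |z_k|^2 - \gamma^2|v_k|^2 \leq 0 \quad \myas
$$

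Next I would take unconditional expectations and sum from $k=0$ to $k=N$. Telescoping the left-hand side gives
$$
  \myE[V_{N+1}(x_{N+1})] - V_0(x_0) + \sum_{k=0}^N \myE|z_k|^2 \leq \gamma^2 \sum_{k=0}^N \myE|v_k|^2.
$$
Since $V_0(x_0) = V_0(0) = 0$ by hypothesis and $x_0=0$, and $V_{N+1}\geq 0$, dropping the first term and letting $N\to\infty$ produces the desired $l^2$-gain bound, and hence the conclusion.

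The only real subtlety --- and the main step requiring care --- is the measurability and independence bookkeeping needed to justify substituting the random vectors $x_k, v_k$ into the pointwise inequality and to identify the resulting inner expectation with a conditional expectation via Lemma~\ref{lem-property-conditional-expectation}. This is essentially identical to the analogous step in the proof of Proposition~\ref{prop-stable-external}, after which the argument reduces to a routine telescoping sum. Notably, no convexity of $V_k$ is required here: the controller $\alpha_k$ has already absorbed $u$ into the feedback, so there is no need to separate $u$ from $v$ inside $H_k$, which is precisely why this lemma serves as the natural converse counterpart --- rather than a direct analogue --- of the bounded real lemma of Section~\ref{sec:EDEO}.
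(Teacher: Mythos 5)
Your proof is correct and follows essentially the same route as the paper's: substitute the feedback into the hypothesis, use Lemma~\ref{lem-property-conditional-expectation} to identify $\myE[V_{k+1}(F_k(x,\alpha_k(x),v,\omega_k))]\big|_{x=x_k,\,v=v_k}$ with $\myE[V_{k+1}(x_{k+1})\mid\myF_k]$, then telescope, use $V_0(0)=0$ and $V_{N+1}\geq 0$, and let $N\to\infty$. The only difference is cosmetic ordering (you substitute into the pointwise inequality before taking expectations, while the paper expands the expectation difference first), and your closing observation that no convexity of $V_k$ is needed here is accurate.
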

\begin{proof}
  Let $\{x_k\}_{k\in\myN}$ be the solution of \eqref{eq-system-control-general-F(x,u,v,omega)44} with control $u_k^*=\alpha_k(x_k)$ and initial $x_0=0$,
  and $z_k=m_k(x_k,\alpha_k(x_k),v_k)$ is the corresponding output, then
  \begin{eqnarray*}
    \myE[V_{k+1}(x_{k+1})]-\myE[V_k(x_k)]&=&\myE[V_{k+1}(F_k(x_{k},\alpha_k(x_k),v_k,\omega_k))]-\myE[V_k(x_k)]\\
    &=&\myE\big\{\myE[V_{k+1}(F_k(x_{k},\alpha_k(x_k),v_k,\omega_k))|\myF_k]\big\}\\
    &&\ \ \ -\myE[V_k(x_k)].
  \end{eqnarray*}
  Since $x_k$ and $v_k$ are  $\myF_k-$measurable and $\omega_k$ is independent of $\myF_k$,
  by Lemma \ref{lem-property-conditional-expectation}, we have
  \begin{eqnarray*}
    &&\myE[V_{k+1}(x_{k+1})]-\myE[V_k(x_k)]=\myE\big\{\myE[V_{k+1}(F_k(x,u,v,\omega_k))-V_k(x)|\myF_k]_{x=x_k,u=\alpha_k(x_k),v=v_k}\big\}\\
   && =\myE\big\{\big[\myE[V_{k+1}(F_k(x,u,v,\omega_k))]-V_k(x)\big]_{x=x_k,u=\alpha_k(x_k),v=v_k}\big\}\\
   && =\myE\{\big[\Delta_{u,v}V_k(x)]_{x=x_k,u=\alpha_k(x_k),v=v_k}\}\\
   && =\myE\big\{\big[\Delta_{u,v}V_k(x)+|m_k(x,u,v)|^2-\gamma^2|v|^2\big]_{x=x_k,u=\alpha_k(x_k),v=v_k}\big\} \\
   &&\quad-\myE[|z_k|^2]+\gamma^2\myE[|v_k|^2]\\
   && =\myE\big\{\big[H_k(x,u,v)-\gamma^2|v|^2\big]_{x=x_k,u=\alpha_k(x_k),v=v_k}\big\}-\myE[|z_k|^2]+\gamma^2\myE[|v_k|^2].
  \end{eqnarray*}
  Applying \eqref{eq-cond-H-control-general}, we have
    \begin{eqnarray*}
    \myE[V_{k+1}(x_{k+1})]-\myE[V_k(x_k)]\leq -\myE[|z_k|^2]+\gamma^2\myE[|v_k|^2].
  \end{eqnarray*}
  Taking summation  on both sides of the above inequality from $k=0$ to $N\in\myN$, we obtain
 \begin{eqnarray*}
    \myE[V_{N+1}(x_{N+1})]-\myE[V_0(0)]\leq
-\sum\limits_{k=0}^N\myE[|z_k|^2]+\gamma^2\sum\limits_{k=0}^N\myE[|v_k|^2].
  \end{eqnarray*}
  Keeping $V_0(0)=0$ and $V_k(x)>0$ for $k\in\myN$  and $x\ne 0$ in mind,  we have
  \begin{eqnarray*}
    \sum\limits_{k=0}^N\myE[|z_k|^2]\leq\gamma^2\sum\limits_{k=0}^N\myE[|v_k|^2].
  \end{eqnarray*}
  Let $N\rightarrow\infty$, we get
   \begin{eqnarray*}
    \sum\limits_{k=0}^\infty\myE[|z_k|^2]\leq\gamma^2\sum\limits_{k=0}^\infty\myE[|v_k|^2].
  \end{eqnarray*}
  This proves that $u^*_k=\alpha_k(x_k)$ is the $H_\infty$ control of system \eqref{eq-system-control-general-F(x,u,v,omega)}.
\end{proof}
\begin{theorem}
  For $\gamma>0$, suppose there exist  positive  functions $V_k:\myR^n\rightarrow \myR^+, k\in\myN$ with $V_k(0)=0$, which satisfy the following conditions:

  i) There exist $\alpha_k(x)$ and $\eta_k(x)$ such that for any $x\in\myR^n$ and
  $v\in\myR^{n_v}$,
  \begin{eqnarray*}
    \partial_uH_k(x,\alpha_k(x),\eta_k(x))=0,\ \ \partial_vH_k(x,\alpha_k(x),\eta_k(x))=0.
  \end{eqnarray*}

  ii) There exist matrices  $M\in\myS_+^{n_u}(\myR)$ and $N\in\myS^{n_v}(\myR)$, such that
  $$
  \left[\begin{array}{cc}
  \partial^2_{uu}H_k & \partial^2_{uv}H_k\\
  \partial^2_{vu}H_k & \partial^2_{vv}H_k
  \end{array}\right]
  \leq\left[\begin{array}{cc}
    M & 0\\
    0 & N
  \end{array}\right],\gamma^2I-N>0.
  $$

iii)
  $$\begin{array}{l}
     \myH(V_k(x)):=H_k(x,\alpha_k(x),\eta_k(x))+\frac{1}{2}\eta_k(x)^T\left[N+N(\gamma^2I_{n_v}-N)^{-1}N^T\right]\eta_k(x)\leq
0.
  \end{array}
  $$

  Then $\{u^*_k=\alpha_k(x)\}_{k\in\myN}$ is the $H_\infty$ controller for system \eqref{eq-system-control-general-F(x,u,v,omega)}
\end{theorem}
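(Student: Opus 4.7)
The strategy is to apply Lemma~\ref{lem-H-infty-general}: once I can show the pointwise dissipation inequality
\[
H_k(x, \alpha_k(x), v) - \gamma^2|v|^2 \leq 0 \qquad \forall\, x \in \myR^n,\ v \in \myR^{n_v},\ k \in \myN,
\]
that lemma immediately gives $u^*_k = \alpha_k(x_k)$ as an $H_\infty$ controller with $\|\myL_{u^*}\|\leq\gamma$. So the whole task is to upgrade the pointwise-at-$\eta_k$ information supplied by (i)--(iii) into this uniform-in-$v$ inequality.

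The route is a second-order Taylor expansion of $v \mapsto H_k(x, \alpha_k(x), v)$ around $v = \eta_k(x)$, followed by completing the square. Condition (i) makes $\eta_k(x)$ a critical point of this function in $v$, so the first-order Taylor term drops out. Condition (ii), restricted to its lower-right $n_v\times n_v$ block (equivalently, testing the matrix inequality against vectors of the form $(0,w)^T$), yields $\partial^2_{vv}H_k \leq N$, and Taylor's formula with integral remainder produces the quadratic upper bound
\[
H_k(x, \alpha_k(x), v) \leq H_k(x, \alpha_k(x), \eta_k(x)) + \tfrac{1}{2}(v-\eta_k(x))^T N\,(v-\eta_k(x)).
\]

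The decisive algebraic step is to split $\gamma^2|v|^2 = \tfrac{\gamma^2}{2}|v|^2 + \tfrac{\gamma^2}{2}|v|^2$, use one half to complete the square against the $N$-quadratic, and drop the other half as a nonpositive contribution. Using $\max_v\bigl[-\tfrac{1}{2}v^T A v - b^T v\bigr] = \tfrac{1}{2}b^T A^{-1} b$ with $A = \gamma^2 I_{n_v} - N$ (positive definite by condition (ii)) and $b = N\eta_k(x)$, a short computation yields
\[
\sup_{v\in\myR^{n_v}}\Bigl[\tfrac{1}{2}(v-\eta_k)^T N(v-\eta_k) - \tfrac{\gamma^2}{2}|v|^2\Bigr] = \tfrac{1}{2}\eta_k^T\bigl[N + N(\gamma^2 I_{n_v} - N)^{-1} N^T\bigr]\eta_k.
\]
Assembling the Taylor bound, this supremum, and the discarded $-\tfrac{\gamma^2}{2}|v|^2 \leq 0$, I arrive at
\[
H_k(x, \alpha_k(x), v) - \gamma^2|v|^2 \leq \myH(V_k(x)).
\]
Condition (iii) then closes the argument: $\myH(V_k(x))\leq 0$, and Lemma~\ref{lem-H-infty-general} yields the claim.

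The main obstacle is finding the right $\tfrac{1}{2}$--$\tfrac{1}{2}$ splitting of $\gamma^2|v|^2$. A naive completion of the square directly against the full $\gamma^2|v|^2$ produces the less favorable matrix $2\gamma^2 I_{n_v} - N$ inside the inverse, giving a bound strictly larger than $\myH(V_k(x))$ and thus not implied by (iii). It is precisely the decision to spend only half of the $\gamma^2|v|^2$ budget on the completion of the square, and to absorb the remainder into nonpositive slack, that produces the cleaner matrix $\gamma^2 I_{n_v} - N$ appearing in (iii). Once this trick is identified, the rest is routine matrix calculus, and the assumption $\gamma^2 I - N > 0$ in (ii) is exactly what is needed to invert $\gamma^2 I_{n_v} - N$ and to make the completion-of-square step valid.
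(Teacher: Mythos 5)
Your proof is correct and follows essentially the same route as the paper's: a Taylor expansion about the critical point supplied by (i), domination of the Hessian remainder via the block inequality in (ii), completion of the square in $v$ using $\gamma^2 I_{n_v}-N>0$, and an appeal to Lemma~\ref{lem-H-infty-general}, with your $\tfrac12$--$\tfrac12$ split of $\gamma^2|v|^2$ being a clean way to land exactly on the constant $\tfrac12\eta_k^T[N+N(\gamma^2I_{n_v}-N)^{-1}N^T]\eta_k$ appearing in (iii). One side remark is backwards, though it does not affect the argument: completing the square against the \emph{full} $\gamma^2|v|^2$ yields $(2\gamma^2I_{n_v}-N)^{-1}$, and since $(2\gamma^2I_{n_v}-N)^{-1}\leq(\gamma^2I_{n_v}-N)^{-1}$ that bound is \emph{tighter}, not looser, so it too would be covered by (iii).
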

\begin{proof}
By Taylor's series expansion, it follows that
 \begin{eqnarray*}
   &&H_k(x,u,v)=H_k(x,\alpha_k(x),\eta_k(x))\\
  && \quad\quad\quad\quad\quad\quad+\langle\partial_uH_k(x,\alpha_k(x),\eta_k(x)),u-\alpha_k(x)\rangle\\
  && \quad\quad\quad\quad\quad\quad+\langle\partial_vH_k(x,\alpha_k(x),\eta_k(x)),v-\eta_k(x)\rangle\\
  && \quad\quad\quad\quad\quad\quad+\frac{1}{2}\bigg[(u-\alpha_k(x))^T\partial^2_{uu}H_k(\bar{\theta})(u-\alpha_k(x))\\
  && \quad\quad\quad\quad\quad\quad+2(u-\alpha_k(x))^T\partial^2_{uv}H_k(\bar{\theta})(v-\eta_k(x))\\
  &&\quad\quad\quad\quad\quad\quad +(v-\eta_k(x))^T\partial^2_{vv}H_k(\bar{\theta})(v-\eta_k(x))\bigg]
 \end{eqnarray*}
 where $\bar{\theta}=(x,\alpha_k(x)+\theta(u-\alpha_k(x)),\eta_k(x)+\theta(v-\eta_k(x)))$, $0<\theta<1$. So
 \begin{eqnarray*}
   H_k(x,u,v)-\gamma^2|v|^2&\leq& H_k(x,\alpha_k(x),\eta_k(x))+\frac{1}{2}\bigg[(u-\alpha_k(x))^TM(u-\alpha_k(x))\\
   &&\quad\quad\quad\quad\quad\quad+(v-\eta_k(x))^TN(v-\eta_k(x))\bigg]-\gamma^2|v|^2.
 \end{eqnarray*}
Completing  squares with respect to  $v$ on the right hand side of
the  above inequality, we have
 \begin{eqnarray*}
   H_k(x,u,v)-\gamma^2|v|^2 &\leq&  \myH(V_k(x))  +\frac{1}{2}\|(u-\alpha_k(x))\|^2_{M}\\
   &&-\|v+(\gamma^2I_{n_v}-N)^{-1}\eta_k(x)\|^2_{\gamma^2I_{n_v}-\frac{1}{2}N}.
 \end{eqnarray*}
 Applying condition  iii), we obtain
  \begin{eqnarray*}
   H_k(x,u,v)-\gamma^2|v|^2\leq  \frac{1}{2}\|(u-\alpha_k(x))\|^2_{M}.
 \end{eqnarray*}
 So, for $u^*=\alpha_k(x)$, there is
   \begin{eqnarray*}
   H_k(x,\alpha_k(x),v)-\gamma^2|v|^2\leq 0.
 \end{eqnarray*}
 By Lemma \ref{lem-H-infty-general},  $u^*_k=\alpha_k(x)$, $k\in\myN$, are  the $H_\infty$ control for system \eqref{eq-system-control-general-F(x,u,v,omega)}.
\end{proof}

Now, we consider the special time-invariant case with affine form of
\eqref{eq-system-control-H-infty}. Denote
\begin{eqnarray}
\nonumber\myH(V(x),u,\beta):=\frac{1}{\beta}\myE[V(\beta
f(x,u,\omega_0))]-V(x)+|m(x,u)|^2
\end{eqnarray}
\begin{theorem}\label{th-Hinfty-control-design}
  For $\gamma>0$, if there exist a positive convex function $V:\myR^n\rightarrow \myR^+$, a positive real number $\beta>1$ and a  function
  $\alpha:\myR^n\rightarrow\myR^{n_u}$,  such that
  \begin{eqnarray}
   \label{eq-H-Hinfty-cond-1} &&\myH(V(x),\alpha(x),\beta)\leq0,\\
   \label{eq-H-Hinfty-cond-2} &&G_\beta(V(x))\leq \gamma^2,\forall x\in\myR^n
  \end{eqnarray}
  where $G_\beta$ is defined by \eqref{eq-G-beta}, then $u^*_k=\alpha(x_k)$, $k\in\myN$,  is the $H_\infty$ control of system \eqref{eq-system-control-H-infty} and $\|\myL_{u^*}\|\leq\gamma$.
\end{theorem}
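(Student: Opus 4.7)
The plan is to recognize that, once the candidate feedback is fixed, Theorem \ref{th-Hinfty-control-design} collapses to the disturbance-attenuation bound already established in Proposition \ref{prop-stable-external}. Substituting $u_k=\alpha(x_k)$ into the controlled system \eqref{eq-system-control-H-infty} and defining $\tilde f(x,\omega):=f(x,\alpha(x),\omega)$ and $\tilde m(x):=m(x,\alpha(x))$, the closed-loop system
\begin{equation*}
 x_{k+1}=\tilde f(x_k,\omega_k)+g(x_k,\omega_k)v_k,\qquad z_k=\bigl[\tilde m(x_k);\ m_1(x_k)v_k\bigr]
\end{equation*}
is precisely in the form of \eqref{eq-system-1} with $\tilde f,\tilde m$ replacing $f,m$. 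Under this identification, hypothesis \eqref{eq-H-Hinfty-cond-1} is exactly $\myH_1(V(x),\beta)\le 0$ for the closed-loop system, and \eqref{eq-H-Hinfty-cond-2} is \eqref{eq-H-1-ineq-2} unchanged, since $g$ and $m_1$ do not depend on $u$.

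Given this matching, I would invoke Proposition \ref{prop-stable-external} directly on the closed-loop system. If one prefers to inline the argument, I would repeat the key one-step inequality: setting $\alpha_0=1/\beta\in(0,1)$ and using the convexity of $V$ to split
\begin{equation*}
 \tilde f(x,\omega_k)+g(x,\omega_k)v=\tfrac{1}{\beta}\bigl[\beta\tilde f(x,\omega_k)\bigr]+\tfrac{\beta-1}{\beta}\Bigl[\tfrac{\beta}{\beta-1}g(x,\omega_k)v\Bigr],
\end{equation*}
then taking expectation and adding/subtracting $|\tilde m(x)|^2$, $|m_1(x)v|^2$ and $\gamma^2|v|^2$ to obtain
\begin{equation*}
 \Delta_v V(x)\le \myH(V(x),\alpha(x),\beta)+\bigl[G_\beta(V(x))-\gamma^2\bigr]|v|^2-|z|^2+\gamma^2|v|^2\le -|z|^2+\gamma^2|v|^2,
\end{equation*}
where in the last step I apply \eqref{eq-H-Hinfty-cond-1} and \eqref{eq-H-Hinfty-cond-2}. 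Conditioning on $\myF_k$ through Lemma \ref{lem-property-conditional-expectation}, taking expectations, telescoping from $k=0$ to $k=N$ with $x_0=0$ and $V(0)=0$, and letting $N\to\infty$ yields
\begin{equation*}
 \sum_{k=0}^{\infty}\myE|z_k^{0,0,u^*,v}|^2\le\gamma^2\sum_{k=0}^{\infty}\myE|v_k|^2,\qquad\forall\,v(\cdot)\in\myLiv,
\end{equation*}
which is $\|\myL_{u^*}\|\le\gamma$.

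There is no genuine obstacle here; the theorem is essentially a reformulation of Proposition \ref{prop-stable-external} for the closed-loop system. The only mild technical point I would flag is that the convex decomposition splits the arguments of $\tilde f$ and $g$ only, so the fact that $g$ and $m_1$ are independent of $u$ in \eqref{eq-system-control-H-infty} is crucial: it is precisely what lets the feedback be silently absorbed into $\tilde f$ and $\tilde m$ without contaminating the $v$-channel, so that \eqref{eq-H-Hinfty-cond-2} can be used verbatim. The normalization $V(0)=0$ needed for the telescoping bound should be read into \eqref{eq-H-Hinfty-cond-1} (evaluated at $x=0$, together with $\tilde f(0,\cdot)\equiv 0$, $\tilde m(0)=0$, and convexity of $V$), or imposed as a standing requirement on the Lyapunov candidate as in Proposition \ref{prop-stable-external}.
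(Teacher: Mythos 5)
Your proposal is correct and takes essentially the same route as the paper: the paper's proof likewise substitutes $u_k^*=\alpha(x_k)$, observes that \eqref{eq-H-Hinfty-cond-1} and \eqref{eq-H-Hinfty-cond-2} become \eqref{eq-H-1-ineq-1} and \eqref{eq-H-1-ineq-2} for the closed-loop system, and invokes Proposition \ref{prop-stable-external}. Your remark that the normalization $V(0)=0$ must be read into the hypotheses is a fair observation that the paper glosses over, but it does not change the argument.
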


\begin{proof}
 Substituting  $u^*_k=\alpha(x_k)$ into system \eqref{eq-system-control-H-infty}, it is easy to see that the inequalities \eqref{eq-H-Hinfty-cond-1} and
 \eqref{eq-H-Hinfty-cond-2} are  same with   \eqref{eq-H-1-ineq-1} and \eqref{eq-H-1-ineq-2}, respectively. By Proposition \ref{prop-stable-external}, we know that system \eqref{eq-system-control-H-infty} under control $u^*$ is externally stable with $\|\myL_{u^*}\|\leq \gamma$.
\end{proof}
\begin{remark}
How to solve the  inequality \eqref{eq-H-Hinfty-cond-1} is the key
to design the $H_\infty$ controller. If, for every $x\in\myR^n$,
there exists $\alpha^*(x)$ such that
  $$
  \alpha^*(x)=\arg\min\limits_{u\in\myR^{n_v}}\myH(V(x),u,\beta),
  $$
then under   the  condition  \eqref{eq-H-Hinfty-cond-1},
  $$
  \myH(V(x),\alpha^*(x),\beta)\leq\myH(V(x),\alpha(x),\beta)\leq 0.
  $$
  So, we can choose $\alpha^*(x)$ as the $H_\infty$ controller for system \eqref{eq-system-control-H-infty}.
\end{remark}

\section{Simulation Examples}
In this section, we present some numerical examples to illustrate
the effectiveness of our developed theory.
\begin{example}
Consider the following 1-dimensional system
\begin{eqnarray}\label{eq-example-1-system}
\left\{\begin{array}{l}
    x_{k+1}=ax_k+b\cos(x_k)\omega_kv_k,\\
  z_k=\left[\begin{array}{c}
  cx_k\\
  c_1|v_k|
  \end{array}\right],
  \end{array}\right.
\end{eqnarray}
where $a,b,c$ and $c_1$ are real numbers with $|a|<1$. Take
$$
\myfunV=\{V:V(x)=px^2,x\in\myR, p>0\}.
$$
Then
\begin{eqnarray}
 \label{eq-example-1-H-1}&& \myH_1(V(x),\beta)=pa^2\beta x^2- px^2+c^2x^2,\\
\nonumber && G_\beta(V(x))=\frac{p\beta b^2\cos^2x}{\beta-1}+c_1^2,
\beta>1.
\end{eqnarray}
By \eqref{eq-H-1-ineq-1} and \eqref{eq-example-1-H-1}, we have
\begin{eqnarray*}
 \nonumber pa^2\beta-p+c^2\leq 0.
\end{eqnarray*}
In order that the  above inequality is solvable, it only needs the
following two inequalities to be held:
\begin{eqnarray*}
1<\beta\leq\frac{1}{a^2}, \ \  p\geq\frac{c^2}{1-a^2\beta}.
\end{eqnarray*}
Since
\begin{eqnarray*}
  \sup\limits_{x\in\myR}\{G_\beta(V(x))\}=\frac{p\beta
  b^2}{\beta-1}+|c_1|^2,
\end{eqnarray*}
if we take $\beta=\frac{1}{|a|}$, $p=\frac {c^2}{1-a^2\beta}$,  then
$$
\sup\limits_{x\in\myR}\{G_{\frac{1}{|a|}}(V(x))\}=\frac{b^2c^2}{(1-|a|)^2}+|c_1|^2.
$$
Set
$$
\gamma^{*^2}=\frac{b^2c^2}{(1-|a|)^2}+|c_1|^2.
$$
 So, by Proposition~\ref{prop-stable-external},  system \eqref{eq-example-1-system} is
 not only
 externally stable with $\|\myL\|\leq\gamma^*$,  but also internally stable.
Fig.1 shows the simulations of the trajectories of
$|z_k|^2,\gamma^{*^2}v_k^2$ and $v_k^2$ of system
\eqref{eq-example-1-system} with coefficients
$a=0.99,b=0.01,c=c_1=0.2$ and the initial state $x_0=0$. From the
simulations,  we can  see that the inequality
\eqref{eq-definition-external-z-norm} holds and
$\|\myL\|\leq\gamma^*$.
\begin{figure}[htb]\label{fig-exampel-1-1}
\center
  \includegraphics[width=8cm,height=4.8cm]{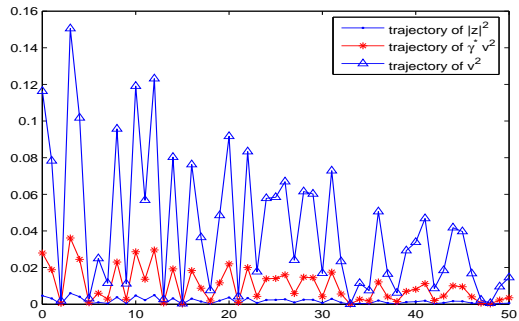}
  \caption{Trajectories of $|z|^2,\gamma^{*^2}v^2$ and disturbance $v^2$ of system
\eqref{eq-example-1-system}}
\end{figure}
\end{example}

\begin{example}
Considering the following stochastic system
  \begin{eqnarray}\left\{
  \begin{array}{l}
      x^{(1)}_{k+1}=\theta^{(1)}_kx^{(1)}_k+\theta^{(2)}_k|x^{(2)}_k|^2+u^{(1)}_k+v^{(1)}_k,\\
    x^{(2)}_{k+1}=\theta^{(3)}_kx^{(2)}_k+\theta^{(4)}_k\frac{x^{(3)}_k}{1+|x^{(3)}_k|}+u^{(2)}_k,\\
    x^{(3)}_{k+1}=\theta^{(5)}_kx^{(3)}_k\cos(x^{(2)}_k)
    +u^{(1)}_k+v^{(2)}_k,\\
    x^{(1)}_0\in\myR,  x^{(2)}_0\in\myR, x^{(3)}_0\in(0,1), k\in\myN
    \end{array}\right.\label{eq-example-2-system}
  \end{eqnarray}
  with the controlled output
  \begin{equation}\label{eq-example-2-output}
    z_k=\left[\begin{array}{c}
      0.1x^{(1)}_k+0.1x^{(3)}_k\cos(x^{(2)}_k)\\
      \frac{1}{7}|x^{(2)}_k|^2\\
      u^{(1)}_k
    \end{array}\right],
  \end{equation}
  where $\{\theta^{(i)}_k,i=1,2,3,4,5\}_{k\in\myN}$ are independently
  identically distributed
  random variable sequences, and $\theta^{(1)}_k,\cdots, \theta^{(5)}_k$ are also
  independent of each other. Moreover,
  $\theta^{(1)}_k$, $\theta^{(3)}_k$, $\theta^{(4)}_k$ and $\theta^{(5)}_k$
  are uniformly distributed on $[0,1]$, and  $\theta^{(2)}_k$ is uniformly distributed
  on $[-1/2,1/2]$. $\{u^{(1)}_k,u^{(2)}_k\}_{k\in\myN}$  are the control sequences and
  $\{v^{(1)}_k\}_{k\in\myN}$
  is the  exogenous disturbance  sequence.

  Denote $\omega_k=(\theta^{(1)}_k,\theta^{(2)}_k,\theta^{(3)}_k,\theta^{(4)}_k,\theta^{(5)}_k)^T$,
  $x=(x^{(1)},x^{(2)},x^{(3)})^T$, $u=(u^{(1)},u^{(2)})^T$ and $v=(v^{(1)},v^{(2)})^T$, then
  the corresponding  $f$, $g$, $m$ and $m_1$ in \eqref{eq-system-control-H-infty} can
  be written as
$$
f(x,u,\omega_k)=
  \left[\begin{array}{c}
    \theta^{(1)}_kx^{(1)}+\theta^{(2)}_k(x^{(2)})^2+u^{(1)}\\
    \theta^{(3)}_kx^{(2)}+\theta^{(4)}_k\frac{x^{(3)}}{1+|x^{(3)}|}+u^{(2)}\\
    \theta^{(5)}_kx^{(3)}\cos(x^{(2)})+u^{(1)}
  \end{array}\right], \ \ g(x,\omega_k)=\left[\begin{array}{cc}
    1 & 0\\
    0 & 0\\
    0 & 1
  \end{array}\right],
$$
$$
m(x,u)=\left[\begin{array}{c}
      0.1x^{(1)}+0.1x^{(3)}\cos(x^{(2)})\\
      \frac{1}{7}(x^{(2)})^2\\
      u^{(1)}
    \end{array}\right], \ \
     m_1(x)=0.
$$
  Suppose the function $V:\myR^3\rightarrow\myR^+$ has the form of
  \begin{equation}
    V(x)=p_1(x^{(1)})^2+p_2(x^{(2)})^4+p_3(x^{(3)})^2.
  \end{equation}
  For each  $\beta>1$, we have
  \begin{eqnarray*}
\myH(V(x),u,\beta)&=&\beta\myE\bigg[p_1(\theta^{(1)}_kx^{(1)}+\theta^{(2)}_k(x^{(2)})^2+u^{(1)})^2+p_2(\theta^{(3)}_kx^2+\theta^{(4)}_k\frac{x^{(3)}}{1+|x^{(3)}|}+u^{(2)})^4\\
    &&+p_3(\theta^{(5)}_k\cos(x^{(2)})x^{(3)}+u^{(1)})^2\bigg]-[p_1(x^{(1)})^2+p_2(x^{(2)})^4+p_3(x^{(3)})^2]\\
   && +0.01[x^{(1)}+x^{(3)}\cos(x^{(2)})]^2+\frac{1}{49}(x^{(2)})^4+(u^{(1)})^2
  \end{eqnarray*}
  and
  \begin{eqnarray*}
G_\beta(V(x))=\frac{\beta}{\beta-1}\sup\limits_{0\neq
v\in\myR^2}\frac{p_1(v^{(1)})^2+p_3(v^{(2)})^2}{|v|^2}=\frac{\beta}{\beta-1}\max(p_1,p_3).
  \end{eqnarray*}
   For $\gamma=0.75$, taking
   $$
   \beta=\sqrt[3]{8/5},p_1=p_2=p_3=p=\frac{1}{16},
   $$
   $$
   u^{(1)*}=-\frac{\beta^3p}{4\beta^3p+2}[x^{(1)}+x^{(3)}\cos(x^{(2)})]
   $$
   and $u^{(2)*}=-\frac{1}{2}[x^{(2)}+\frac{x^{(3)}}{1+|x^{(3)}|}]$, we have
\begin{eqnarray*}
    \myH(V(x),u^*,\beta)&\leq& p\beta^3\myE\bigg[(\theta^{(1)}_kx^{(1)}+\theta^{(2)}_k (x^{(2)})^2+u^{(1)*})^2+(\theta^{(3)}_kx^{(2)}+\theta^{(4)}_k\frac{x^{(3)}}{1+|x^{(3)}|}+u^{(2)*})^4\\
    &&+(\theta^{(5)}_k\cos(x^{(2)})x^{(3)}+u^{(1)*})^2\bigg]-p[(x^{(1)})^2+(x^{(2)})^4+(x^{(3)})^2]\\
    &&+0.01[x^{(1)}+x^{(3)}\cos(x^{(2)})]^2+\frac{1}{49}(x^{(2)})^4+|u^{(1)*}|^2\\
   &=&(2p\beta^3+1)\bigg[u^{(1)*}+\frac{p\beta^3}{2(2p\beta^3+1)}(x^{(1)}+x^{(3)}\cos(x^{(2)}))\bigg]^2\\
   &&+p\beta^3\bigg[\theta^{(3)}_kx^{(2)}+\theta^{(4)}_k\frac{x^{(3)}}{1+|x^{(3)}|}+u^{(2)*}\bigg]^4\\
   &&+p\beta^3\bigg[\frac{1}{3}(x^{(1)})^2+\frac{1}{12}(x^{(2)})^4+\frac{1}{3}|\cos(x^{(2)})|^2(x^{(3)})^2\bigg]\\
   &&-\frac{p^2\beta^6}{4(2p\beta^3+1)}[x^{(1)}+x^{(3)}\cos(x^{(2)})]^2-p(x^{(1)})^2\\
   &&-p(x^{(2)})^4-p(x^{(3)})^2+0.01[x^{(1)}+x^{(3)}\cos(x^{(2)})]^2+\frac{1}{49}(x^{(2)})^4
\end{eqnarray*}
\begin{eqnarray*}
   &&=p\beta^3\bigg[\frac{1}{80}(x^{(2)})^4+\frac{1}{24}\frac{(x^{(2)})^2(x^{(3)})^2}{(1+|x^{(3)}|)^2}\\
   &&\quad+\frac{1}{80}\frac{(x^{(3)})^4}{(1+|x^{(3)}|)^4}+\frac{1}{3}(x^{(1)})^2+\frac{1}{12}(x^{(2)})^4\\
   &&\quad+\frac{1}{3}|\cos(x^{(2)})|^2(x^{(3)})^2\bigg]-\frac{p^2\beta^6}{4(2p\beta^3+1)}[x^{(1)}+x^{(3)}\cos(x^{(2)})]^2\\
&&\quad-p(x^{(1)})^2-p(x^{(2)})^4-p(x^{(3)})^2+0.01[x^{(1)}+x^{(3)}\cos(x^{(2)})]^2+\frac{1}{49}(x^{(2)})^4\\
   &&\leq p\beta^3\bigg[\frac{1}{80}(x^{(2)})^4+\frac{1}{24}\frac{(x^{(2)})^2(x^{(3)})^2}{(1+|x^{(3)}|)^2}+\frac{1}{80}\frac{(x^{(3)})^4}{(1+|x^{(3)}|)^4}\\
   &&\quad+\frac{1}{3}(x^{(1)})^2+\frac{1}{12}(x^{(2)})^4+\frac{1}{3}|\cos(x^{(2)})|^2(x^{(3)})^2\bigg]\\
   &&\quad-p(x^{(1)})^2-p(x^{(2)})^4-p(x^{(3)})^2+0.01[x^{(1)}+x^{(3)}\cos(x^{(2)})]^2+\frac{1}{49}(x^{(2)})^4\\
   &&\leq p\beta^3\bigg[\frac{1}{80}(x^{(2)})^4+\frac{1}{24}\frac{(x^{(2)})^2(x^{(3)})^2}{(1+|x^{(3)}|)^2}+\frac{1}{80}\frac{(x^{(3)})^4}{(1+|x^{(3)}|)^4}\\
   &&\quad+\frac{1}{3}(x^{(1)})^2+\frac{1}{12}(x^{(2)})^4+\frac{1}{3}(x^{(3)})^2\bigg]\\
   &&\quad-p(x^{(1)})^2-p(x^{(2)})^4-p(x^{(3)})^2+0.02[(x^{(1)})^2+(x^{(3)})^2]+\frac{1}{49}(x^{(2)})^4\\
   &&\leq p\beta^3\bigg[\frac{1}{80}(x^{(2)})^4+\frac{1}{48}((x^{(2)})^4+(x^{(3)})^2)+\frac{1}{80}(x^{(3)})^2\\
   &&\quad+\frac{1}{3}(x^{(1)})^2+\frac{1}{12}(x^{(2)})^4+\frac{1}{3}(x^{(3)})^2\bigg]\\
   &&\quad-p(x^{(1)})^2-p(x^{(2)})^4-p(x^{(3)})^2+0.02[(x^{(1)})^2+(x^{(3)})^2]+\frac{1}{49}(x^{(2)})^4\\
   &&=[\frac{p\beta^3}{3}+0.02-p](x^{(1)})^2+[p\beta^3(\frac{1}{12}+\frac{1}{80}+\frac{1}{48})+\frac{1}{49}-p](x^{(2)})^4\\
   &&\quad+[p\beta^3(\frac{1}{3}+\frac{1}{48}+\frac{1}{80})+0.02-p](x^{(3)})^2\\
   &&\leq\left[\frac{5}{12}p\beta^3+\frac{1}{48}-p\right]\left[(x^{(1)})^2+(x^{(2)})^4+(x^{(3)})^2\right]\\
   &&=0.
  \end{eqnarray*}
  As far as
  \begin{eqnarray*}
    G_\beta(V(x))=\frac{\beta}{\beta-1}\frac{1}{16}\leq\gamma^2,
  \end{eqnarray*}
it can be obtained by the fact that
  \begin{eqnarray*}
    5\times8^3\beta^3=8^4=4096>3645=5\times9^3.
  \end{eqnarray*}
So, for any $x\in\myR^3$,
   the above given $V(x)$, $u^*=(u^{(1)*},u^{(2)*})^T$ and $\beta$ satisfy conditions of
   \eqref{eq-H-Hinfty-cond-1} and \eqref{eq-H-Hinfty-cond-2}.
   According to Theorem~\ref{th-Hinfty-control-design}, $u^*=(u^{(1)*},
   u^{(2)*})^T$ is the corresponding $H_\infty$ control of system
   (\ref{eq-example-2-system}). Moreover, system
   (\ref{eq-example-2-system}) is internally stable under the
   $H_\infty$ control $u^*$.
   Fig.2  shows the trajectories of $H_\infty$ control
   $u^{(1)*}$ and  $u^{(2)*}$. Fig.3  shows the samples of the trajectories of the
   states $x^{(1)}$, $x^{(2)}$ and $x^{(3)}$ under the control $u^*$.  Fig.4 shows the trajectories
   of $\gamma^2 |v|^2$ and  $|z|^2$. From Fig.4,
   we see that $\|\myL_{u^*}\|\leq \gamma$,  which verifies the
   correctness of
   Theorem~\ref{th-Hinfty-control-design}.
   \begin{figure}\label{fig-exampel-2-1}
   \begin{center}
     \includegraphics[width=8cm,height=4.8cm]{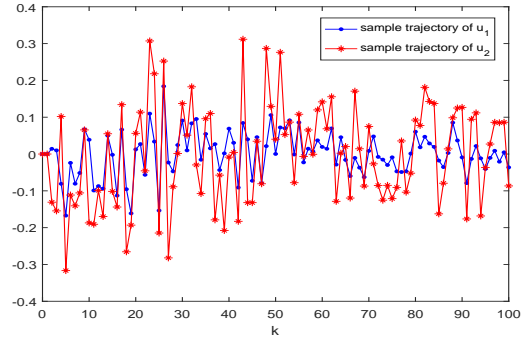}
     \caption{Trajectories of $H_\infty$ control $u^*$ for system \eqref{eq-example-2-system}}
     \end{center}
   \end{figure}
   \begin{figure}\label{fig-exampel-2-2}
   \begin{center}
     \includegraphics[width=8cm,height=4.8cm]{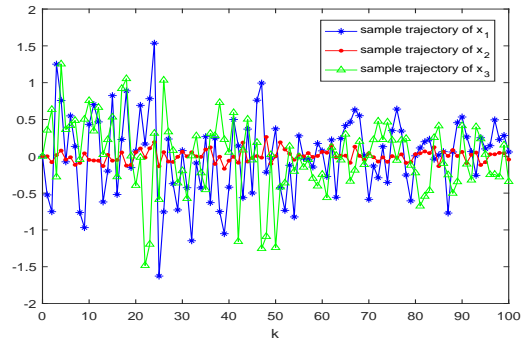}
     \caption{Trajectories of system \eqref{eq-example-2-system} under $H_\infty$ control}
     \end{center}
   \end{figure}
   \begin{figure}\label{fig-exampel-2-3}
   \begin{center}
     \includegraphics[width=8cm,height=4.8cm]{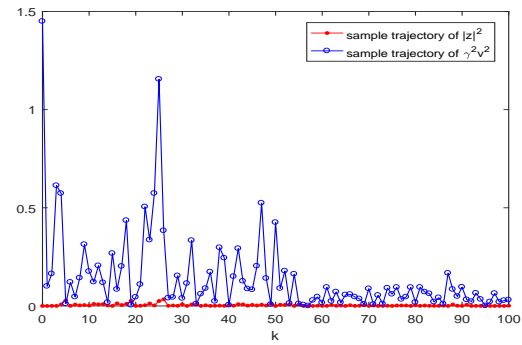}
     \caption{Trajectories of $|z|^2$ and $\gamma^2v^{(2)}$ of
     system \eqref{eq-example-2-system} under $H_\infty$ control}
     \end{center}
   \end{figure}
\end{example}

\section{Conclusions}
 We have introduced the convex analysis  method   to study the $H_\infty$ control
 for  more general discrete-time nonlinear stochastic systems (see systems (\ref{eq-system-control-H-infty}) and
 (\ref{eq-system-control-general-F(x,u,v,omega)})),
based on which,  a stochastic version of bounded real lemma for
discrete-time nonlinear stochastic systems has been obtained. It can
be found that our concerned systems are more general than affine
nonlinear system (\ref{eq iih13_2.2.3dfd}). It is expected that the
developed   convex analysis technique  can also be applied to deal
with the output feedback $H_\infty$ control and  other robust
control problems  such as in \cite{LinByrnes1995}.


\end{document}